\newtheorem{theorem}{Theorem}
\newtheorem{corollary}[theorem]{Corollary}
\newtheorem{lemma}[theorem]{Lemma}
\newtheorem{proposition}[theorem]{Proposition}
\theoremstyle{definition}
\newtheorem{definition}[theorem]{Definition}
\theoremstyle{remark}
\newtheorem{remark}[theorem]{Remark}
\def\reals{\mathbf{R}}
\def\complexes{\mathbf{C}}
\def\E{\mathbb{E}\,} 
\def\1{\mathbf{1}}
\def\F{\mathcal{F}}
\renewcommand{\Re}{\operatorname{Re}}
\renewcommand{\Im}{\operatorname{Im}}
\title{On the multiplicative chaos of non-Gaussian log-correlated fields}
\author{Janne Junnila}
\thanks{The author was supported by the Doctoral Programme in Mathematics and Statistics at University of Helsinki}
\address{University of Helsinki, Department of Mathematics and Statistics, P.O. Box 68, FIN-00014 University of Helsinki, Finland}
\email{janne.junnila@helsinki.fi}
\keywords{Multiplicative chaos, non-Gaussian, log-correlated}
\subjclass[2010]{Primary 60G57, 60G50, Secondary 60K35, 60G60}
\begin{document}

\maketitle

\begin{abstract}
  We study non-Gaussian log-correlated multiplicative chaos, where the random field is defined as a sum of independent fields that satisfy suitable moment and regularity conditions. The convergence, existence of moments and analyticity with respect to the inverse temperature are proven for the resulting chaos in the full subcritical range. These results are generalizations of the corresponding theorems for Gaussian multiplicative chaos. A basic example where our results apply is the non-Gaussian Fourier series
 \[\sum_{k=1}^\infty \frac{1}{\sqrt{k}}(A_k \cos(2\pi k x) + B_k \sin(2\pi k x)),\]
 where $A_k$ and $B_k$ are i.i.d. random variables.
\end{abstract}

\section{Introduction}

\noindent The theory of multiplicative chaos was originally introduced by Kahane \cite{kahane1985,kahane1987positive} as a continuous analogy of Mandelbrot cascades \cite{mandelbrot1974multiplications}.
Kahane's theory concerns weak$^*$-limits of random measures of the form
\[d\mu_n(x) = e^{\sum_{k=1}^n X_k(x) - \frac{1}{2} \sum_{k=1}^n \E X_k(x)^2} \, d\lambda(x),\]
where $X_k$ are independent centered Gaussian random fields on some metric measure space $T$ and $\lambda$ is a reference measure.
It is easy to see that the measures $\mu_n$ form a martingale and converge in the weak$^*$-sense to a limit measure $\mu$ that we call the \emph{multiplicative chaos} associated to the sequence $X_k$.
However, the first non-trivial question in the theory is whether $\mu$ is almost surely zero or not.

In the Euclidean setting Kahane identified the log-correlated random fields to be the edge case when it comes to the non-triviality of the resulting chaos.
We say that the Gaussian random field $X = \sum_{k=1}^\infty X_k$ is log-correlated if it formally has a covariance of the form
\[\E X(x) X(y) = \beta^2 \log \frac{1}{|x-y|} + g(x,y),\]
where $g$ is a bounded and continuous function and $\beta > 0$ is a constant. The parameter $\beta$ is often called the \emph{inverse temperature} in the mathematical physics literature.
The non-triviality of the chaos measure $\mu$ then depends on $\beta$ and the dimension $d$ of the space: If $0 < \beta < \sqrt{2d}$, $\mu$ is almost surely non-trivial; if $\beta \ge \sqrt{2d}$, it is almost surely zero.
The regime $0 < \beta < \sqrt{2d}$ is called \emph{subcritical}, while the regimes $\beta = \sqrt{2d}$ and $\beta > \sqrt{2d}$ are called \emph{critical} and \emph{supercritical}, respectively.
In the critical and supercritical cases it is still possible to get non-trivial measures by performing a suitable renormalization \cite{duplantier2014critical,duplantier2014renormalization,madaule2016glassy}.

The study of Gaussian multiplicative chaos in the log-correlated case has spurred a lot of interest, and comprehensive reviews exist \cite{rhodes2014gaussian,rhodes2016lecture}.
In this situation there are also results on uniqueness and convergence under different approximations \cite{robert2010gaussian,rhodes2014gaussian,shamov2014gaussian,junnila2015uniqueness}.
In the non-Gaussian case the research has so far focused mainly on infinitely divisible processes.
The paper \cite{bacry2003log} studies chaoses that are defined using a cone construction with an infinitely divisible independently scattered random measure on the upper half plane.
A more recent paper \cite{rhodes2014levy} deals with chaoses that are $\star$-scale invariant, a specific class which again implies infinitely divisibility under some small assumptions.
Finally, in \cite{saksman2016multiplicative} a field obtained from a statistical model of the Riemann $\zeta$-function on the critical line is studied.
The resulting chaos measure in this case is almost surely absolutely continuous with respect to a Gaussian chaos measure.

In this paper we will study the multiplicative chaos of (possibly strongly) non-Gaussian locally log-correlated random fields defined on the closed unit cube $I = [0,1]^d \subset \reals^d$, see Definition \ref{def:log_correlated} below.\footnote{Restricting to the unit cube is done purely on practical grounds and the results generalize easily to other domains.}
Let $X_k$ ($k=1,2,\dots$) be continuous, independent and centered random fields on $I$ and $\beta>0$ be a parameter.
We define a sequence of measures on $I$ by setting
\begin{equation}\label{eq:mu_n_definition}
  \mu_n(f;\beta) = \int_I f(x) \frac{e^{\beta \sum_{k=1}^n X_k(x)}}{\E e^{\beta \sum_{k=1}^n X_k(x)}} \, dx
\end{equation}
for all $f \in C(I,\reals)$.
Since we are not anymore in the Gaussian case, we must make the extra assumption that $\E e^{\beta X_k(x)}$ exists for all $k \ge 1$.
Still, $\mu_n(f;\beta)$ is a martingale for all fixed $f \in C(I,\reals)$ and $\beta > 0$, and one gets the almost sure weak$^*$-convergence as in Kahane's theory. Again the crucial question is whether the limit is non-trivial or not.

We will in fact allow $\beta$ to take complex values, in which case it becomes important to ensure that the denominator $\E e^{\beta X_k(x)}$ does not vanish.
Clearly for any finite $n$ it is possible to pick a neighbourhood of $(0,\sqrt{2d})$ (which will be our region of interest) where this is true. The assumptions that will follow will ultimately ensure that one can choose such a neighbourhood in such a way that this holds for all $n \ge 1$ simultaneously.
It is worth noting that in the case of complex $\beta$ we do not expect the limit $\mu$ to be a random measure but rather a distribution.
In the present paper we will however settle for proving that for a fixed $f \in C(I,\complexes)$ the limit exists almost surely and is analytic with respect to the parameter $\beta$.

As an interesting example of a non-Gaussian locally log-correlated random field, consider the random Fourier series $\sum_{k=1}^\infty X_k(x)$, where
\begin{equation}\label{eq:fourier_field}
  X_k(x) = \frac{1}{\sqrt{k}} (A_k \cos(2\pi k x) + B_k \sin(2\pi k x)), \quad x \in [0,1].
\end{equation}
Here $A_k$ and $B_k$ are i.i.d.\ centered random variables with variance $1$.
In this case we have the following theorem.

\begin{theorem}\label{thm:fourier_result}
  Assume that the fields $X_k$ are defined as in \eqref{eq:fourier_field} and that $\E e^{\lambda A_1} < \infty$ for all $\lambda \in \reals$.
  Then there exists an open set $U \subset \complexes$ containing the interval $(0,\sqrt{2d})$ such that for any compact $K \subset U$ there exists $p > 1$ for which the martingale $\mu_n(f;\beta)$ converges in $L^p(\Omega)$ for all $\beta \in K$ and $f \in C(I,\complexes)$.

  As a corollary, for a fixed $f \in C(I,\complexes)$ the maps $\beta \mapsto \mu_n(f;\beta)$ converge almost surely uniformly on compact subsets of $U$ to an analytic map $\beta \mapsto \mu(f;\beta)$.
\end{theorem}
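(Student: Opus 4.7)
The plan is to reduce the theorem to a general convergence result for non-Gaussian log-correlated chaoses (proved later in the paper) by verifying its hypotheses for the specific Fourier field \eqref{eq:fourier_field}, and then to deduce the corollary from the resulting $L^p$ control via a standard argument on locally uniform convergence of analytic functions. The first step is to identify the log-correlation structure. A direct computation gives $\E X_k(x)X_k(y) = \frac{1}{k}\cos(2\pi k(x-y))$, and the classical Fourier identity
\[\sum_{k=1}^\infty \frac{\cos(2\pi k u)}{k} = -\log|2\sin(\pi u)|\]
shows that the covariance of $X$ has the form $\log\frac{1}{|x-y|}+g(x,y)$ with $g$ bounded and continuous on $I\times I$, matching the log-correlation hypothesis in Definition~\ref{def:log_correlated}.

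Second, I would verify the probabilistic assumptions on the fields $X_k$. Since $\E e^{\lambda A_1}<\infty$ for every $\lambda\in\reals$, the Laplace transform $z\mapsto\E e^{zA_1}$ extends to an entire function, and the same holds for $B_1$. Writing $X_k(x) = a_k(x)A_k + b_k(x)B_k$ with deterministic trigonometric coefficients $|a_k(x)|,|b_k(x)|\le 1/\sqrt{k}$, this yields uniform upper bounds and uniform non-vanishing of $\E e^{\beta X_k(x)}$ on compact subsets of a sufficiently thin complex tube around $\reals$, together with analogous control of any finite number of $\beta$-derivatives. The pathwise regularity is immediate: $X_k$ is a trigonometric polynomial at frequency $k$, so Bernstein's inequality gives $\|X_k\|_\infty + k^{-1}\|X_k'\|_\infty \lesssim k^{-1/2}(|A_k|+|B_k|)$, a random variable with finite exponential moments of all orders.

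With these hypotheses in hand, the general theorem produces the announced open neighbourhood $U$ of $(0,\sqrt{2d})$ and the uniform $L^p$ convergence of $\mu_n(f;\beta)$ for every compact $K\subset U$. For the corollary I would shrink $K$ slightly and apply Cauchy's integral formula to the analytic functions $\beta\mapsto \mu_n(f;\beta)-\mu_m(f;\beta)$, bounding their supremum on $K$ by their $L^p$ norms on a slightly larger compact $K'\subset U$; a Borel--Cantelli argument along a geometric subsequence then upgrades this to almost sure uniform convergence on $K$, and the limit is analytic as a uniform limit of analytic functions.

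The principal obstacle is the second paragraph: the precise moment and regularity assumptions needed by the general theorem involve simultaneous control of $\E e^{\beta X_k(x)}$ for $\beta$ in a complex neighbourhood, uniformly in $k$. Real exponential moments of $A_1$ alone are not quite enough; one has to combine them carefully with the $1/\sqrt{k}$ coefficient decay and the oscillatory structure to obtain bounds that are uniform simultaneously in $k$ and in $\beta\in K$, and it is this uniformity that allows one to select a single neighbourhood $U$ on which the martingale convergence holds for all $n$.
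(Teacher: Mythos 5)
Your overall strategy --- verify the hypotheses of Theorem~\ref{thm:main_result} for the Fourier field and then upgrade the $L^p$ control to almost sure locally uniform convergence of analytic functions --- is the same as the paper's, and your treatment of the regularity and exponential-moment conditions is essentially the paper's argument (the paper computes $\sup_{x}|X_k'(x)|=2\pi\sqrt{k}\sqrt{A_k^2+B_k^2}$ and checks \eqref{eq:continuity3}; your Bernstein-type bound is equivalent). There is, however, a genuine gap in your first paragraph. Definition~\ref{def:log_correlated} is \emph{not} the statement that the covariance of the full field $\sum_k X_k$ equals $\log\frac{1}{|x-y|}+g(x,y)$ with $g$ bounded; it is the quantitative condition \eqref{eq:log_correlated} on the \emph{partial} sums,
\[\Big|\sum_{k=1}^n \E X_k(x)X_k(y) - \min\Big(\log\tfrac{1}{|x-y|},\ \sum_{k=1}^n \E X_k(0)^2\Big)\Big|\le C,\]
required uniformly in $n$ and in $|x-y|\le\delta$. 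The classical identity $\sum_{k=1}^\infty \cos(2\pi ku)/k=-\log|2\sin(\pi u)|$ only gives the $n=\infty$ statement and says nothing about the truncations. The substantive work in the paper's proof of Theorem~\ref{thm:fourier_result} is exactly this uniformity: for $|x-y|\le 1/n$ one Taylor-expands to get $\sum_{k=1}^n\cos(2\pi k(x-y))/k=\log n+O(1)$, while for $1/n<|x-y|\le 1/2$ one must show the tail $R_n(t)=\sum_{k>n}\cos(2\pi kt)/k$ is bounded uniformly in that range; the paper does this via the representation $\sum_{k=n+1}^N e^{2\pi ikt}/k=\int_0^1\frac{e^{2\pi i(N+1)t}r^N-e^{2\pi i(n+1)t}r^n}{e^{2\pi it}r-1}\,dr$, which yields $|R_n(t)|\lesssim \frac{1}{(n+1)\sin(2\pi t)}=O(1)$ precisely when $|t|\ge 1/n$. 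Without such a tail estimate (Abel summation would also work) your verification of the locally log-correlated structure is incomplete.

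Two smaller points of comparison. For the corollary you propose Cauchy's formula plus Borel--Cantelli along a geometric subsequence; this can be made to work but needs a maximal inequality to pass to the full sequence, whereas the paper treats $z\mapsto\mu_n(f;z)$ as a martingale in the Bergman space $A^p(B_r)$, whose Radon--Nikodym property gives almost sure convergence in $A^p(B_r)$, hence locally uniform convergence, in one step. The uniformity issue you flag in your final paragraph (control of $\E e^{\beta X_k(x)}$ for complex $\beta$, uniformly in $k$) is real but is handled inside the proof of Theorem~\ref{thm:main_result} via Lemma~\ref{lemma:z_mgf}; for the Fourier field you only need the real condition \eqref{eq:X_exponential_moments}, which is immediate from $|X_k(x)|\le|A_k|+|B_k|$ and the assumption $\E e^{\lambda A_1}<\infty$.
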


\begin{remark}
  For $\beta \in \reals$ a standard argument using the fact that $C(I,\reals)$ is separable shows that the maps $\beta \mapsto \mu_n(f;\beta)$ converge almost surely for all $f \in C(I)$ simultaneously, and as a limit one obtains a random measure $\mu(\cdot;\beta)$ which is a continuous function of $\beta$ in the weak$^*$-topology of measures.
\end{remark}

The extension to the complex case is quite non-trivial in this situation, since we are missing local independence of the increments $X_k$, and hence the previously known methods for proving analyticity of the chaos fail completely. Here we develop a new method inspired by the clever and simple new approach due to Berestycki \cite{berestycki2015elementary}. In contrast to \cite{berestycki2015elementary} we completely bypass the $L^1$-estimates, performing instead a direct estimate in $L^p$ via a dyadic analysis of the field. As a further distinction to \cite{berestycki2015elementary}, one may note that Girsanov's lemma is not applicable in the non-Gaussian setting.

Theorem~\ref{thm:fourier_result} is a corollary of a more general result, which we state next.

\begin{definition}\label{def:log_correlated}
We say that the sequence $(X_k)_{k=1}^\infty$ has a locally log-correlated structure if the following hold:
\begin{itemize}
  \item We have $\sup_{x \in I} \E X_k(x)^2 \to 0$ and $\sum_{k=1}^\infty \E X_k(0)^2 = \infty$.

  \item There exists a constant $\delta > 0$ such that for all $n \ge 1$ and $x,y \in I$ satisfying $|x-y| \le \delta$ we have
    \begin{equation}\label{eq:log_correlated}
      |\sum_{k=1}^n \E X_k(x) X_k(y) - \min\Big(\log \frac{1}{|x-y|}, \sum_{k=1}^n \E X_k(0)^2 \Big)| \le C
    \end{equation}
    for some constant $C > 0$.
\end{itemize}
\end{definition}

In addition to having a locally log-correlated structure, we will require certain regularity of the fields $X_k$, which we list as conditions \eqref{eq:moment_condition} and \eqref{eq:continuity}:
\begin{align}
  & \sup_{x \in I} \sum_{k=1}^\infty \Big(\E |X_k(x)|^{3+\varepsilon}\Big)^{\frac{3}{3+\varepsilon}} < \infty \quad \text{for some } \varepsilon > 0 \label{eq:moment_condition} \\
  & \E \left|\sum_{k=1}^n (X_k(x) - X_k(y)) \right|^r \le C_r e^{r \sum_{k=1}^n \E X_k(0)^2} |x-y|^r \quad \text{for } n \ge 1, \text{ and } r \ge r_0 \label{eq:continuity}
\end{align}
In condition \eqref{eq:continuity} the constant $C_r > 0$ may depend on $r$ and $r_0>0$ is an arbitrary finite constant.
We note that \eqref{eq:continuity} can be deduced from either of the following two conditions:
\begin{align}
  \sum_{k=1}^n \E |X_k(x) - X_k(y)|^r \le C_r e^{r \sum_{k=1}^n \E X_k(0)^2} |x-y|^r \quad \text{for } n \ge 1, \text{ and } r \ge 2 \label{eq:continuity2} \\
  \sum_{k=1}^n \E \sup_{x \in I} |X_k'(x)|^r \le C_r e^{r \sum_{k=1}^n \E X_k(0)^2} \quad \text{for } n \ge 1, \text{ and } r \ge 2 \label{eq:continuity3}.
\end{align}
Indeed, the mean value theorem shows that \eqref{eq:continuity3} implies \eqref{eq:continuity2}, while Rosenthal's inequality \cite{rosenthal1970subspaces} shows that \eqref{eq:continuity2} implies \eqref{eq:continuity}.
Note that in condition \eqref{eq:continuity3} we implicitly assume that $X_k(x)$ is almost surely continuously differentiable on $I$.

\begin{theorem}\label{thm:main_result}
  Assume that $(X_k)_{k=1}^\infty$ is a sequence of independent, centered and continuous random fields having a locally log-correlated structure and satisfying \eqref{eq:moment_condition} and \eqref{eq:continuity}. Assume further that
  \begin{equation}\label{eq:X_exponential_moments}
    \sup_{k \ge 1} \sup_{x \in I} \E e^{\lambda X_k(x)} < \infty
  \end{equation}
  for all $\lambda \in \reals$.
  Then the conclusions of Theorem~\ref{thm:fourier_result} hold for the measures $\mu_n$.
\end{theorem}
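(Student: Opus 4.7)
The plan is to establish the uniform $L^p$-estimate
$$\sup_{n \ge 1}\,\sup_{\beta\in K}\,\E|\mu_n(f;\beta)|^p < \infty \qquad (\star)$$
for every compact $K$ contained in a suitable complex neighborhood $U$ of $(0,\sqrt{2d})$ and some $p=p(K)>1$, together with its $L^p$-Cauchy analogue for the increments $\mu_n-\mu_m$. The set $U$ is constructed by exploiting \eqref{eq:X_exponential_moments} and $\sup_x \E X_k(x)^2 \to 0$, which force the cumulant generating functions $\psi_k(\beta,x) = \log \E e^{\beta X_k(x)}$ to be jointly analytic and uniformly controlled on a common complex strip around $(0,\sqrt{2d})$; in particular the normalizations $\E e^{\beta X_k(x)}$ never vanish, so that $\beta \mapsto \mu_n(f;\beta)$ is holomorphic on $U$ for every fixed $n$.

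The heart of the proof is a dyadic $L^p$-analysis in the spirit of Berestycki. Fix $\beta \in K$ and $p>1$ in the appropriate subcritical window (reducing to $p(\Re\beta)^2<2d$ for real $\beta$), and choose a sequence of levels $n_j$ with $s_{n_j}:=\sum_{k=1}^{n_j}\E X_k(0)^2\approx j\log 2$, so that the correlation length of the partial field matches the side $2^{-j}$ of the dyadic cubes $\{Q_{j,i}\}_i$ partitioning $I$. The continuity estimate \eqref{eq:continuity} together with Kolmogorov's chaining lemma shows that the oscillation of $\sum_{k=1}^{n_j}X_k$ on each $Q_{j,i}$ has bounded moments of arbitrarily high order, which lets us replace, up to a harmless multiplicative constant, the contribution of $Q_{j,i}$ to $\mu_{n_j}(\cdot;\beta)$ by a quantity depending only on the value at the centre of the cube.

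One then bounds $\E|\mu_n(f;\beta)|^p$ by expanding and factoring scale by scale: using $|e^{\beta X}|=e^{\Re\beta\cdot X}$, independence across $k$, and the log-correlation bound \eqref{eq:log_correlated}, the resulting cascade produces, for each of the $\lceil p\rceil$ factors, a product over scales of Gaussian-like terms $2^{-jd}\,2^{jp(p-1)\Re(\beta^2)/2}$ multiplied by the non-Gaussian correction $\exp\bigl(\sum_{k=1}^{n_j}[\psi_k(\beta,x)-\tfrac12\beta^2\E X_k(x)^2]\bigr)$. The moment condition \eqref{eq:moment_condition} combined with a Taylor expansion shows that this correction is bounded uniformly in $n$, $x$ and $\beta\in K$, so the entire iteration runs as in the Gaussian case up to bounded multiplicative error, yielding a convergent geometric series precisely in the subcritical window. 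A completely parallel argument with $\mu_n-\mu_m$ in place of $\mu_n$ provides the $L^p$-Cauchy property at each fixed $\beta\in K$.

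The analyticity and the uniform-in-$\beta$ statements are assembled by the Cauchy integral formula applied to the holomorphic family $\beta\mapsto\mu_n(f;\beta)-\mu_m(f;\beta)$: for a disc $D\subset U$ enclosing a slightly larger compact $K'\supset K$,
$$\|\mu_n(f;\beta)-\mu_m(f;\beta)\|_{L^p(\Omega)}\le\frac{1}{2\pi}\oint_{\partial D}\frac{\|\mu_n(f;z)-\mu_m(f;z)\|_{L^p(\Omega)}}{|z-\beta|}\,|dz|,$$
and combining this with $(\star)$ and dominated convergence upgrades pointwise $L^p$-convergence to uniform $L^p$-convergence on $K$ and, along a subsequence, to almost sure uniform convergence on compact subsets of $U$, whence analyticity of the limit $\beta\mapsto\mu(f;\beta)$. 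The main obstacle is the iteration step: tools available in the Gaussian setting (Girsanov's theorem, Kahane's convexity inequalities) are unavailable, and one must run the dyadic cascade using only the $(3+\varepsilon)$-moment control in \eqref{eq:moment_condition}, keeping all error terms uniform in $n$ and in $\beta\in K$ while preserving the full subcritical range.
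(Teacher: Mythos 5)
Your overall architecture (a uniform $L^p$ bound on a complex neighbourhood of $(0,\sqrt{2d})$, control of the normalizations via the cumulant generating functions $\psi_k$, chaining via \eqref{eq:continuity}, and the observation that the non-Gaussian correction $\exp\bigl(\sum_k[\psi_k(\beta,x)-\tfrac12\beta^2\E X_k(x)^2]\bigr)$ is bounded --- which is exactly the content of the paper's Laplace-transform lemma for the blocks) matches the paper's. But the central step is missing. For $p\in(1,2)$ the phrase ``expanding and factoring scale by scale \dots\ for each of the $\lceil p\rceil$ factors, a product over scales of Gaussian-like terms $2^{-jd}2^{jp(p-1)\Re(\beta^2)/2}$'' is not an argument: $|\mu_n|^p$ does not expand into a cascade for non-integer $p$, and the only moment that does expand --- the second --- is finite only in the $L^2$ regime $\Re\beta<\sqrt d$. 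The exponent $p(p-1)\Re(\beta^2)/2$ is the multifractal answer one is trying to prove, not a step one may invoke. Reaching the full range $\Re\beta<\sqrt{2d}$ requires a concrete substitute for Girsanov/Kahane, and your closing sentence essentially concedes that this substitute is not supplied.

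What the paper does instead: after renormalizing into blocks $Z_l$ with $\E Z_l(x)^2\approx l\log 2$, it writes $\mu_N(f)=\sum_{l=0}^n\int_I f(x)E_n(x)\1_{A_l(x)}\1_{B_{l,n}(x)}\,dx$ according to the \emph{last} level $l$ at which $Z_l(x)\ge\alpha\E Z_l(x)^2$, for a parameter $\alpha$ slightly above $\Re\beta$. On the event that the field stays below this linear barrier from level $l$ onward, a \emph{conditional} $L^2$ estimate over dyadic cubes of side $\approx 2^{-l}$, given $\F_l$, converges for all $\Re\beta<\sqrt{2d}$ (this is where the condition $\frac{(2\Re\beta-\alpha)^2}{2}-(\Re\beta)^2+d>0$ enters); the leftover factor $\sup_{x\in I_{l,i}}|E_l(x)|^p\1_{A_l(x)}$ is then shown to have expectation $\lesssim e^{-\varepsilon l}$ because exceeding the barrier at level $l$ is rare. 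It is this barrier decomposition plus conditioning that replaces the integer-moment cascade; without it your scheme stalls at $\Re\beta=\sqrt d$. Two minor points: once the uniform $L^p$ bound is in hand no separate Cauchy argument is needed, since $L^p$-bounded martingales with $p>1$ converge in $L^p$ and almost surely; and for almost sure \emph{uniform} convergence in $\beta$ the paper uses convergence of the Bergman-space-valued martingale in $A^p(B_r)$ rather than a subsequence extraction, which as you state it would only yield almost sure convergence along that subsequence.
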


The proof of Theorem~\ref{thm:main_result} is given in Section 2, and a brief outline is as follows:
We start by normalizing the situation in such a way that the variance of the field on the level $n$ is approximately $n \log(2)$.
For each $x \in I$ we focus on the last level $l$ on which the field is exceptionally large.
For points sharing a common level $l$ we perform a splitting of $I$ into dyadic cubes with side length approximately $2^{-l}$ and derive an $L^2$-estimate in each of these cubes, conditioned on the level $l$.
This takes care of the contribution coming from the tail of the field.
After the conditional estimate we are still left with the contribution coming from level $l$, which is then handled by approximating the $L^p$-norm of the exponential of the supremum of the field, relying on the fact that the field being exceptionally large on level $l$ is an event of low probability.
Once we have established the boundedness in $L^p$, the rest of the proof is rather routine.

In Section 3 we first prove Theorem~\ref{thm:fourier_result}, after which we provide another application of Theorem~\ref{thm:main_result} where we consider a field that is the sum of dilated stationary processes, see Theorem~\ref{thm:general_scaling}.
The latter example is related to the general model presented in \cite{mannersalo2002multifractal}.

\medskip
\noindent \textit{Acknowledgements}.\quad I wish to thank my thesis advisor Eero Saksman for proposing this problem, as well as for the many fruitful discussions regarding it and other topics.
Many thanks also to Christian Webb and Julien Barral for their valuable comments on the paper.

\section{Proof of Theorem~\ref{thm:main_result}}

\noindent We start by splitting the field $\sum_{k=1}^\infty X_k(x)$ into blocks of approximately constant variances.
For all $j \ge 0$ let $t_j \ge 1$ be the smallest index for which
\begin{equation}\label{eq:tj}
  \sum_{k=1}^{t_j} \E X_k(0)^2 \ge \log(2) j.
\end{equation}
The $t_j$ are well-defined because of our assumption that $\sum_{k=1}^\infty \E X_k(0)^2 = \infty$, and we can use them to define the following auxiliary fields
\[Y_j(x) = \sum_{k=t_{j-1}}^{t_j - 1} X_k(x), \quad Z_j(x) = \sum_{k=1}^j Y_k(x) \quad (j \ge 1).\]
Here we use the convention that $Y_j(x) = 0$ for all $x \in I$ if $t_{j-1} = t_j$, and we also set $Z_0(x) \equiv 0$.

The following lemma shows how the locally log correlated structure of the fields $X_k$ transfers to the fields $Z_j$.
\begin{lemma}\label{lemma:z_covariance}
  There exists a constant $C > 0$ such that for $|x-y| \le \delta$ we have
  \begin{equation}\label{eq:z_covariance}
    \Big|\E Z_j(x)Z_j(y) - \min\Big(\log \frac{1}{|x-y|}, \log(2) j\Big)\Big| \le C.
  \end{equation}
  In particular the following inequalities hold for some constant $C > 0$:
  \begin{equation}\label{eq:z_nearby_covariance}
    |\E Z_j(x)Z_j(y) - \log(2) j| \le C \quad  (j \ge 1, \; |x-y| \le 2^{-j})
  \end{equation}
  \begin{equation}\label{eq:z_long_range_covariance}
    |\E Z_j(x)Z_j(y) - \E Z_m(x) Z_m(y)| \le C \quad (j \ge m, \; 2^{-m-1} \le |x-y| \le \delta)
  \end{equation}
\end{lemma}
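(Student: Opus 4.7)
The plan is to reduce the lemma to the single hypothesis \eqref{eq:log_correlated} applied at $n=t_j-1$, and then control the replacement of $\sum_{k=1}^{t_j-1}\E X_k(0)^2$ by $j\log 2$.

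First I would observe that by a telescoping, $Z_j(x) = \sum_{k=1}^{t_j-1} X_k(x)$ (using $t_0=1$), so \eqref{eq:log_correlated} with $n = t_j - 1$ directly gives
\[
\Bigl|\E Z_j(x) Z_j(y) - \min\Bigl(\log\frac{1}{|x-y|},\ \sum_{k=1}^{t_j-1}\E X_k(0)^2\Bigr)\Bigr| \le C
\]
for $|x-y|\le \delta$. The only thing left, for \eqref{eq:z_covariance}, is to show that the second argument of $\min$ differs from $j\log 2$ by a bounded quantity. Here the minimality of $t_j$ gives
\[
\sum_{k=1}^{t_j-1}\E X_k(0)^2 < j\log 2 \le \sum_{k=1}^{t_j-1}\E X_k(0)^2 + \E X_{t_j}(0)^2,
\]
and because $\sup_x \E X_k(x)^2 \to 0$ we have a uniform bound $\sup_{k,x}\E X_k(x)^2 \le M$. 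Hence the quantity inside $\min$ can be swapped for $j\log 2$ at the cost of an $O(1)$ error, proving \eqref{eq:z_covariance}.

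The two specializations then reduce to distinguishing the two branches of the minimum. For \eqref{eq:z_nearby_covariance}: $|x-y|\le 2^{-j}$ gives $\log(1/|x-y|)\ge j\log 2$, so the minimum equals $j\log 2$ and the claim is immediate from \eqref{eq:z_covariance}. For \eqref{eq:z_long_range_covariance} with $j \ge m$ and $2^{-m-1}\le |x-y|\le\delta$, I would split into the case $|x-y|\in[2^{-m-1},2^{-m}]$, where the minima at level $j$ and at level $m$ are both within $\log 2$ of a single value (either $m\log 2$ or $\log(1/|x-y|)$, which differ by at most $\log 2$), and the case $|x-y|\in[2^{-m},\delta]$, where $\log(1/|x-y|)\le m\log 2 \le j\log 2$ so both minima equal $\log(1/|x-y|)$. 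In either case \eqref{eq:z_covariance} applied to both $Z_j$ and $Z_m$ and the triangle inequality give the desired $O(1)$ bound.

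There is no genuine obstacle here: the proof is essentially bookkeeping, with the only slightly delicate point being the uniform boundedness of the individual variances that is needed to compare $\sum_{k=1}^{t_j-1}\E X_k(0)^2$ with $j\log 2$. Everything else is case analysis on where $|x-y|$ sits relative to $2^{-j}$, $2^{-m}$ and $2^{-m-1}$.
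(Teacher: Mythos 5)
Your proposal is correct and follows essentially the same route as the paper: apply \eqref{eq:log_correlated} at $n=t_j-1$, use the minimality of $t_j$ together with $\sup_{k,x}\E X_k(x)^2<\infty$ to get $\sum_{k=1}^{t_j-1}\E X_k(0)^2=j\log 2+O(1)$, and deduce the two specializations by a case analysis on which branch of the minimum is active. The paper dismisses the last two inequalities as "easy corollaries"; your case analysis is exactly the bookkeeping it has in mind.
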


\begin{proof}
  We have $\E Z_j(x) Z_j(y) = \sum_{k=1}^j \E Y_k(x) Y_k(y) = \sum_{k=1}^{t_j - 1} \E X_k(x) X_k(y)$.
  By \eqref{eq:tj} and the assumption that $\E X_k(0)^2 \to 0$, the variance on the level $t_j - 1$ satisfies
  \[\sum_{k=1}^{t_j - 1} \E X_k(0)^2 = \log(2) j + O(1),\]
  so \eqref{eq:z_covariance} follows from \eqref{eq:log_correlated}.
  The inequalities \eqref{eq:z_nearby_covariance} and \eqref{eq:z_long_range_covariance} are easy corollaries.
\end{proof}

The next lemma provides a crucial estimate on the Laplace transform of the fields $Z_j$ and it will be used extensively in the proofs.
The idea behind it is the following: Since $\E X_k(x)^2$ tends to $0$, the constant variance increments $Y_j$ will start to look like Gaussians by the central limit theorem.
This leads one to expect that some sort of Gaussianity appears also in the fields $Z_j$ and here we quantify this for the Laplace transform of the vector $(Z_j(x), Z_j(y))$, where $x,y \in I$.
\begin{lemma}\label{lemma:z_mgf}
  Let $R > 0$.
  There exists $r = r(R) > 0$ such that if $\xi_1,\xi_2 \in \complexes$ satisfy $|\Re \xi_i| \le R$ and $|\Im \xi_i| \le r$ for $i=1,2$, then 
  \begin{equation}
    \E e^{\xi_1 Z_j(x) + \xi_2 Z_j(y)} = e^{\frac{\xi_1^2}{2} \E Z_j(x)^2 + \frac{\xi_2^2}{2} \E Z_j(y)^2 + \xi_1 \xi_2 \E Z_j(x) Z_j(y) + \varepsilon},
  \end{equation}
  where the error $\varepsilon=\varepsilon(\xi_1,\xi_2)$ is bounded and $|\varepsilon| \le C_R$ for some constant $C_R > 0$ depending on $R$.
\end{lemma}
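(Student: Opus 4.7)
The plan is to use independence of the $X_k$ to factor
\[\E e^{\xi_1 Z_j(x) + \xi_2 Z_j(y)} = \prod_{k=1}^{t_j - 1} \phi_k(\xi_1, \xi_2), \quad \phi_k(\xi_1, \xi_2) := \E e^{\xi_1 X_k(x) + \xi_2 X_k(y)},\]
each $\phi_k$ being entire on $\complexes^2$ by \eqref{eq:X_exponential_moments} and Cauchy--Schwarz. I would then take logarithms termwise and show that the total deviation from $\frac{\xi_1^2}{2}\E Z_j(x)^2 + \xi_1 \xi_2 \E Z_j(x) Z_j(y) + \frac{\xi_2^2}{2}\E Z_j(y)^2$ is bounded by a constant depending only on $R$, uniformly in $j$, $x$, $y$.

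For each $k$ the idea is to Taylor expand $\phi_k = 1 + v_k + \rho_k$ around the origin using the elementary bound $|e^z - 1 - z - z^2/2| \le \frac{1}{6}|z|^3 e^{|z|}$ and the centering $\E X_k(x) = 0$, with main term $v_k = \frac{1}{2}\E(\xi_1 X_k(x) + \xi_2 X_k(y))^2$ and cubic remainder satisfying $|\rho_k| \le C_R\, \E(|X_k(x)|^3 + |X_k(y)|^3)\, e^{M(|X_k(x)| + |X_k(y)|)}$ with $M = R + r$. Applying H\"older with conjugate exponents $(3+\varepsilon)/3$ and $(3+\varepsilon)/\varepsilon$, separating the cubic moment (controlled by \eqref{eq:moment_condition}) from the exponential factor (controlled by \eqref{eq:X_exponential_moments} via Cauchy--Schwarz), yields $\sum_k |\rho_k| \le C_R$. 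Since $\sup_z \E X_k(z)^2 \to 0$, the terms $v_k$ are uniformly small in $k$, so for $k \ge k_0 = k_0(R)$ the factor $\phi_k$ lies in the disc $|w - 1| \le 1/2$, where the principal branch of $\log$ is well defined and satisfies $|\log(1 + w) - w| \le |w|^2$. For the finitely many $k < k_0$, strict positivity of $\phi_k$ on the compact real box $[-R, R]^2$ lets me shrink $r = r(R)$ so that all low-frequency factors also stay in $|w - 1| \le 1/2$; this compactness/continuity argument is precisely what forces $r$ to depend on $R$.

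Summing the $\log \phi_k$ and exponentiating (no branch issue arises because $\prod_k \phi_k = \exp(\sum_k \log \phi_k)$), the main term $\sum_k v_k$ collapses to the desired quadratic form via $\E Z_j(x)^2 = \sum_{k=1}^{t_j-1}\E X_k(x)^2$ and $\E Z_j(x) Z_j(y) = \sum_{k=1}^{t_j-1} \E X_k(x) X_k(y)$, which follow from independence of the $X_k$. What remains is to control the residual $O(v_k^2 + \rho_k^2)$; the $\rho_k^2$ part is immediate from uniform smallness of $\rho_k$ combined with $\sum_k |\rho_k| \le C_R$. The subtlest step, which I expect to be the main obstacle, is summing $\sum_k v_k^2$, for which I would invoke the Jensen-type estimate
\[(\E X_k(z)^2)^2 \le (\E |X_k(z)|^{3+\varepsilon})^{4/(3+\varepsilon)} = (\E |X_k(z)|^{3+\varepsilon})^{3/(3+\varepsilon)} \cdot (\E |X_k(z)|^{3+\varepsilon})^{1/(3+\varepsilon)},\]
observing that the first factor on the right sums by \eqref{eq:moment_condition} and the second is uniformly bounded (in fact tends to zero), so $\sum_k v_k^2 \le C_R$. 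This is precisely where the extra $\varepsilon > 0$ in the moment condition is genuinely needed: with only $\varepsilon = 0$ the summability of $v_k^2$ becomes unclear, whereas all other estimates would still be routine consequences of \eqref{eq:moment_condition} and \eqref{eq:X_exponential_moments}.
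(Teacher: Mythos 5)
Your proposal is correct and follows essentially the same route as the paper: factor by independence, Taylor-expand each factor $\phi_k$ to second order, control the cubic remainder by H\"older using \eqref{eq:moment_condition} and \eqref{eq:X_exponential_moments}, take logarithms for $k\ge k_0$ while absorbing the finitely many remaining factors by shrinking $r=r(R)$, and sum the residual quadratic error via $(\E X_k(z)^2)^2\le(\E|X_k(z)|^{3+\varepsilon})^{4/(3+\varepsilon)}$. Only your closing aside is slightly off: the requirement $\varepsilon>0$ is really forced by the H\"older step that must fit the exponential factor alongside the third moments (there is no room for it when $\varepsilon=0$), whereas the summability of $\sum_k v_k^2$ would survive at $\varepsilon=0$ by the same Lyapunov splitting $(\E X_k(z)^2)^2\le(\E|X_k(z)|^{3})\cdot(\E|X_k(z)|^{3})^{1/3}$.
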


\begin{proof}
  Let $K = [-R,R] \times [-r,r]$, where $r \in (0,1]$ will be chosen later.
  Define $\varphi_k(\xi_1,\xi_2) = \E e^{\xi_1 X_k(x) + \xi_2 X_k(y)}$.
  Then Taylor's theorem gives us
  \[\varphi_k(\xi_1,\xi_2) = 1 + \frac{\E X_k(x)^2}{2} \xi_1^2 + \frac{\E X_k(y)^2}{2} \xi_2^2 + \E X_k(x) X_k(y) \xi_1 \xi_2 + c_k O(|\xi|^3),\]
  where
  \begin{align*}
    c_k & = \sup_{\xi_1,\xi_2 \in K} |\frac{\partial^3}{\partial \xi_1^3} \varphi_k(\xi_1,\xi_2)| + \sup_{\xi_1,\xi_2 \in K} |\frac{\partial^3}{\partial \xi_2^3} \varphi_k(\xi_1,\xi_2)| \\
          & \quad + \sup_{\xi_1,\xi_2 \in K} |\frac{\partial^3}{\partial \xi_1^2 \partial \xi_2} \varphi_k(\xi_1,\xi_2)| + \sup_{\xi_1,\xi_2 \in K} |\frac{\partial^3}{\partial \xi_1 \partial \xi_2^2} \varphi_k(\xi_1,\xi_2)|.
  \end{align*}
  We have for all $a, b \in \{0,1,2,3\}$, $a+b = 3$, that
  \[|\frac{\partial^3}{\partial \xi_1^a \xi_2^b} \varphi_k(\xi_1,\xi_2)| \le \E |X_k(x)|^a |X_k(y)|^b |e^{\xi_1 X_k(x) + \xi_2 X_k(y)}|,\]
  which by Hölder's inequality is less than
  \[\Big(\E |X_k(x)|^{3+\varepsilon}\Big)^{\frac{a}{3+\varepsilon}} \Big(\E |X_k(y)|^{3+\varepsilon}\Big)^{\frac{b}{3+\varepsilon}} \Big(\E |e^{\frac{3+\varepsilon}{\varepsilon/2} \xi_1 X_k(x)}|\Big)^{\frac{\varepsilon/2}{3+\varepsilon}} \Big(\E |e^{\frac{3+\varepsilon}{\varepsilon/2} \xi_2 X_k(x)}|\Big)^{\frac{\varepsilon/2}{3+\varepsilon}},\]
  where $\varepsilon$ is given by the assumption \eqref{eq:moment_condition}.
  The last two factors are bounded by the assumption \eqref{eq:X_exponential_moments}.
  Finally because
  \begin{align*}
    & \Big(\E |X_k(x)|^{3+\varepsilon}\Big)^{\frac{a}{3+\varepsilon}} \Big(\E |X_k(y)|^{3+\varepsilon}\Big)^{\frac{b}{3+\varepsilon}} \\
    & \le \frac{a \Big(\E |X_k(x)|^{3 + \varepsilon}\Big)^{\frac{3}{3+\varepsilon}} + b \Big(\E |X_k(y)|^{3 + \varepsilon}\Big)^{\frac{3}{3+\varepsilon}}}{3},
  \end{align*}
  we see that
  \begin{equation}\label{eq:c_k_finite}
    \sum_{k=1}^\infty c_k < \infty
  \end{equation}
  by \eqref{eq:moment_condition}.
  Because $\sup_{x \in I} \E X_k(x)^2 \to 0$, there exists $k_0 \ge 1$ such that for large enough $k \ge k_0$ we have
  \[\Big|\frac{\E X_k(x)^2}{2} \xi_1^2 + \frac{\E X_k(y)^2}{2} \xi_2^2 + \E X_k(x) X_k(y) \xi_1 \xi_2 + c_k O(|\xi|^3)\Big| < \frac{1}{2}\]
  whenever $\xi_1,\xi_2 \in K$.
  In particular if $\log \colon \complexes \setminus (-\infty,0] \to \complexes$ is the branch of the logarithm that takes the value $0$ at $1$, we have
  \[\log(\varphi_k(\xi_1, \xi_2)) = \frac{\E X_k(x)^2}{2} \xi_1^2 + \frac{\E X_k(y)^2}{2} \xi_2^2 + \E X_k(x) X_k(y) \xi_1 \xi_2 + d_k O(|\xi|^3 + |\xi|^6).\]
  Here 
  \begin{align*}
    d_k & = c_k + (\E X_k(x)^2)^2 + (\E X_k(y)^2)^2 + (\E X_k(x) X_k(y))^2 + c_k^2 \\
     & \quad + (\E X_k(x)^2 + \E X_k(y)^2 + |\E X_k(x) X_k(y)|) c_k + \E X_k(x)^2 \E X_k(y)^2 \\
     & \quad + \E X_k(x)^2 |\E X_k(x) X_k(y)| + \E X_k(y)^2 |\E X_k(x) X_k(y)|.
  \end{align*}
  By Hölder's inequality and \eqref{eq:moment_condition} we have
  \[\sup_{x \in I} \sum_{k=1}^\infty (\E X_k(x)^2)^2 \le \sup_{x \in I} \sum_{k=1}^\infty (\E |X_k(x)|^{3 + \varepsilon})^{\frac{4}{3+\varepsilon}} < \infty,\]
  and this together with \eqref{eq:c_k_finite} and Hölder's inequality gives $\sum_{k=1}^\infty d_k < \infty$.
  Now if $\varphi(\xi_1,\xi_2) = \E e^{\xi_1 Z_j(x) + \xi_2 Z_j(y)}$, we have by independence that
  \[\varphi(\xi_1, \xi_2) = \prod_{k=1}^{k_0 - 1} \varphi_k(\xi_1,\xi_2) \prod_{k=k_0}^{t_j-1} \varphi_k(\xi_1, \xi_2).\]
  By continuity $r$ can be chosen so small that the absolute value of the first product is bounded from below and from above for all $\xi_1,\xi_2 \in K$, and hence the product can be swallowed into the constant $C_R$.
  The logarithm of the second product is
  \begin{align*}
    & \sum_{k=k_0}^{t_j - 1} \Big(\frac{\E X_k(x)^2}{2} \xi_1^2 + \frac{\E X_k(y)^2}{2} \xi_2^2 + \E X_k(x) X_k(y) \xi_1 \xi_2 + d_k O(|\xi|^3 + |\xi|^6)\Big) \\
    & = \frac{\xi_1^2}{2} \E Z_j(x)^2 + \frac{\xi_2^2}{2} \E Z_j(y)^2 + \xi_1 \xi_2 \E Z_j(x) Z_j(y) - \frac{\xi_1^2}{2} \sum_{k=1}^{k_0 - 1} \E X_k(x)^2 \\
    & \quad - \frac{\xi_2^2}{2} \sum_{k=1}^{k_0 - 1} \E X_k(y)^2 - \xi_1 \xi_2 \sum_{k=1}^{k_0 - 1} \E X_k(x) X_k(y) + O(|\xi|^3 + |\xi|^6)
  \end{align*}
  and everything but the first $3$ terms can be put in $C_R$.
\end{proof}

To prepare for the proof of Theorem~\ref{thm:main_result} we start by fixing some notation and then prove the main estimates as lemmas.
First of all, we assume that $\delta > \sqrt{d}$, so that the estimates \eqref{eq:z_covariance} and \eqref{eq:z_long_range_covariance} hold for all $x,y \in I$. We will show how to get rid of this assumption in the end.
Second, for a given $\beta$ we let $\alpha \in (\Re \beta, 2\Re \beta)$ be a fixed real parameter.
We will not specify the exact value of $\alpha$, but it will be clear from the proof that choosing it sufficiently close to $\Re \beta$ will work.
We assume that $\Re \beta \in (0,\sqrt{2d})$ and that
\begin{equation}\label{eq:beta_imag_restriction}
  (\Im \beta)^2 < \min\Big\{\frac{(\alpha - \Re \beta)^2}{2}, \frac{(2\Re \beta - \alpha)^2}{2} - (\Re \beta)^2 + d, r^2\Big\},
\end{equation}
where $r$ is obtained from Lemma~\ref{lemma:z_mgf} applied with $R = 2\sqrt{2d}$.
Notice that by choosing $\alpha$ close enough to $\Re \beta$ it is always possible to have $\frac{(2 \Re \beta - \alpha)^2}{2} - (\Re \beta)^2 + d > 0$.

For $l \ge 0$ we let
\[A_l(x) = \{Z_l(x) \ge \alpha \E Z_l(x)^2\}\]
be the event that $Z_l$ is exceptionally large at the point $x \in I$.
Notice that by definition $A_0(x)$ happens surely.
Similarly for $k \ge l$ we let
\[B_{l,k}(x) = \{Z_j(x) < \alpha \E Z_j(x)^2 \text{ for all } l+1 \le j \le k\}\]
be the event that $Z_j$ is small from level $l+1$ to level $k$.
We note that $B_{l,l}(x)$ happens surely.

To keep formulas short (or at least shorter), define
\[E_k(x) = \frac{e^{\beta Z_k(x)}}{\E e^{\beta Z_k(x)}}\]
for all $k \ge 1$, together with the notation
\begin{align*}
Z_{(m,k]}(x) & = Z_k(x) - Z_m(x) \\
E_{(m,k]}(x) & = \frac{e^{\beta Z_{(m,k]}(x)}}{\E e^{\beta Z_{(m,k]}(x)}} \\
A_{(m,k]}(x) & = \{Z_{(m,k]}(x) \ge \alpha \E Z_{(m,k]}(x)^2\}
\end{align*}
for all $0 \le m \le k$.
Moreover, define
\[\F_m = \sigma(X_1,\dots,X_{t_m - 1}).\]
Note that the variables with the subscript $(m,k]$ are independent of $\F_m$.

Finally we use the notation $A \lesssim B$ to indicate that there exists a constant $C>0$ only depending on $\alpha$, $\beta$, $d$ and the distribution of the fields $X_k$ such that the inequality $A \le C B$ holds.
We write $A \approx B$ when both $A \lesssim B$ and $B \lesssim A$ hold.

We start with a couple of technical lemmas.
 
\begin{lemma}\label{lemma:tail_field_estimate}
  Let $k \ge m$. Then
  \begin{align*}
& \E[|E_{(m,k]}(x) \overline{E_{(m,k]}(y)}| \1_{A_{(m,k]}(x)}] \\
& \lesssim e^{- \frac{(\alpha - \Re \beta)^2}{2} \log(2) (k-m)} e^{(\Im \beta)^2 \log(2)(k-m) + \alpha (\Re \beta) \E[Z_{(m,k]}(x) Z_{(m,k]}(y)]}
  \end{align*}
\end{lemma}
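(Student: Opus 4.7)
The plan is to reduce everything to the Laplace transform estimate of Lemma~\ref{lemma:z_mgf} via an exponential tilting (Chernoff-type) of the indicator $\1_{A_{(m,k]}(x)}$.

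First I would rewrite the left-hand side in a purely real-exponential form. Since $|E_{(m,k]}(x)| = e^{\Re\beta \cdot Z_{(m,k]}(x)}/|\E e^{\beta Z_{(m,k]}(x)}|$ and similarly for $y$, Lemma~\ref{lemma:z_mgf} (applied with $\xi_1 = \beta$, $\xi_2 = 0$, noting that $|\Im \beta| \le r$ by the standing assumption \eqref{eq:beta_imag_restriction} and $|\Re\beta| \le \sqrt{2d}$) gives
\[
|\E e^{\beta Z_{(m,k]}(x)}| \approx e^{\frac{(\Re\beta)^2 - (\Im\beta)^2}{2}\, \E Z_{(m,k]}(x)^2},
\]
and similarly at $y$. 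Hence
\[
|E_{(m,k]}(x)\overline{E_{(m,k]}(y)}| \approx e^{\Re\beta\,(Z_{(m,k]}(x) + Z_{(m,k]}(y)) - \frac{(\Re\beta)^2 - (\Im\beta)^2}{2}(\E Z_{(m,k]}(x)^2 + \E Z_{(m,k]}(y)^2)}.
\]

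Next I would bound the indicator by an exponential: for any $s > 0$,
\[
\1_{A_{(m,k]}(x)} = \1_{Z_{(m,k]}(x) \ge \alpha \E Z_{(m,k]}(x)^2} \le e^{s(Z_{(m,k]}(x) - \alpha \E Z_{(m,k]}(x)^2)}.
\]
The natural choice, which I will make, is $s = \alpha - \Re\beta \in (0,\Re\beta)$, so that $\Re\beta + s = \alpha \in (\Re\beta, 2\Re\beta) \subset (0, 2\sqrt{2d})$ lies in the range $R = 2\sqrt{2d}$ required to apply Lemma~\ref{lemma:z_mgf} with real arguments $\xi_1 = \alpha$, $\xi_2 = \Re\beta$. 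This gives
\[
\E e^{\alpha Z_{(m,k]}(x) + \Re\beta \cdot Z_{(m,k]}(y)} \approx e^{\frac{\alpha^2}{2}\E Z_{(m,k]}(x)^2 + \frac{(\Re\beta)^2}{2}\E Z_{(m,k]}(y)^2 + \alpha\Re\beta\, \E Z_{(m,k]}(x)Z_{(m,k]}(y)}.
\]

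Collecting the exponents, the coefficient of $\E Z_{(m,k]}(x)^2$ becomes
\[
\tfrac{\alpha^2}{2} - \tfrac{(\Re\beta)^2 - (\Im\beta)^2}{2} - s\alpha = -\tfrac{(\alpha - \Re\beta)^2}{2} + \tfrac{(\Im\beta)^2}{2},
\]
the coefficient of $\E Z_{(m,k]}(y)^2$ becomes $\tfrac{(\Im\beta)^2}{2}$, and the coefficient of $\E Z_{(m,k]}(x) Z_{(m,k]}(y)$ is $\alpha\Re\beta$. Finally, Lemma~\ref{lemma:z_covariance} (specifically \eqref{eq:z_nearby_covariance} with $x=y$) gives $\E Z_{(m,k]}(x)^2 = \E Z_k(x)^2 - \E Z_m(x)^2 = \log(2)(k-m) + O(1)$, and the same for $y$. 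Substituting yields exactly the claimed bound after absorbing the $O(1)$ terms into the implicit constant. The only genuine obstacle is bookkeeping: one must verify carefully that $s = \alpha - \Re\beta$, $\alpha$, and $\Re\beta$ all fit within the range $[-R, R] \times [-r, r]$ demanded by Lemma~\ref{lemma:z_mgf}, which is guaranteed by our assumption \eqref{eq:beta_imag_restriction} and the choice $R = 2\sqrt{2d}$.
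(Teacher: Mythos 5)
Your proposal is correct and follows essentially the same route as the paper: the modulus bound on $E_{(m,k]}$ via Lemma~\ref{lemma:z_mgf}, the exponential (Chernoff) bound on $\1_{A_{(m,k]}(x)}$ with tilt parameter $\alpha-\Re\beta$, a second application of Lemma~\ref{lemma:z_mgf} with $\xi_1=\alpha$, $\xi_2=\Re\beta$, and finally Lemma~\ref{lemma:z_covariance} to replace $\E Z_{(m,k]}(x)^2$ by $\log(2)(k-m)+O(1)$. The exponent bookkeeping checks out, so nothing further is needed.
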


\begin{proof}
  By Lemma~\ref{lemma:z_mgf} we have
  \begin{align}\label{eq:modulus_of_E_product}
& |E_{(m,k]}(x) \overline{E_{(m,k]}(y)}| = \frac{e^{(\Re \beta) (Z_{(m,k]}(x) + Z_{(m,k]}(y))}}{|\E e^{\beta Z_{(m,k]}(x)}| |\E e^{\overline{\beta} Z_{(m,k]}(y)}|} \\
& \quad \lesssim e^{\frac{(\Im \beta)^2 - (\Re \beta)^2}{2} (\E Z_{(m,k]}(x)^2 + \E Z_{(m,k]}(y)^2)} e^{(\Re \beta) (Z_{(m,k]}(x) + Z_{(m,k]}(y))}. \nonumber
  \end{align}
  Moreover,
  \begin{align*}
& \E[e^{(\Re \beta)(Z_{(m,k]}(x) + Z_{(m,k]}(y))} \1_{A_{(m,k]}(x)}] \\
& \le \E[e^{(\Re \beta)(Z_{(m,k]}(x) + Z_{(m,k]}(y))} e^{(\alpha - \Re \beta) Z_{(m,k]}(x) - (\alpha - \Re \beta) \alpha \E Z_{(m,k]}(x)^2}] \\
& = \E[e^{\alpha Z_{(m,k]}(x) + (\Re \beta) Z_{(m,k]}(y) - \alpha^2 \E Z_{(m,k]}(x)^2 + \alpha (\Re \beta) \E Z_{(m,k]}(x)^2}] \\
& \lesssim e^{-\frac{\alpha^2}{2} \E Z_{(m,k]}(x)^2 + \alpha (\Re \beta) \E Z_{(m,k]}(x)^2 + \frac{(\Re \beta)^2}{2} \E Z_{(m,k]}(y)^2 + \alpha (\Re \beta) \E[Z_{(m,k]}(x) Z_{(m,k]}(y)]},
  \end{align*}
which together with the factor $e^{\frac{(\Im \beta)^2 - (\Re \beta)^2}{2} (\E Z_{(m,k]}(x)^2 + \E Z_{(m,k]}(y)^2)}$ gives us
  \begin{align*}
& \E[|E_{(m,k]}(x) \overline{E_{(m,k]}(y)}| \1_{A_{(m,k]}(x)}] \\
& \lesssim e^{- \frac{(\alpha - \Re \beta)^2}{2} \E Z_{(m,k]}(x)^2} e^{\frac{(\Im \beta)^2}{2} (\E Z_{(m,k]}(x)^2 + \E Z_{(m,k]}(y)^2) + \alpha (\Re \beta) \E[Z_{(m,k]}(x) Z_{(m,k]}(y)]},
  \end{align*}
  from which the claim follows by Lemma~\ref{lemma:z_covariance}.
\end{proof}

The following lemma is used in the proof of Proposition~\ref{prop:l2_estimate} below to handle the tail of the field for points $x,y \in I$ that are far enough from each other.

\begin{lemma}\label{lemma:l2_inner_estimate}
  Assume that $|x-y| \ge 2^{-m-1}$.
  Then for all $n \ge m$ we have
\[\1_{B_{m-1,m}(x)} \1_{B_{m-1,m}(y)} |\E[E_{(m,n]}(x) \overline{E_{(m,n]}(y)} \1_{B_{m,n}(x)} \1_{B_{m,n}(y)} | \F_m]| \lesssim 1.\]
\end{lemma}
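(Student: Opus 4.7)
The plan is to write $\1_{B_{m,n}(x) \cap B_{m,n}(y)} = 1 - \1_{B_{m,n}(x)^c \cup B_{m,n}(y)^c}$ and estimate the two resulting contributions separately. By the triangle inequality,
\[|\E[E_{(m,n]}(x) \overline{E_{(m,n]}(y)} \1_{B_{m,n}(x) \cap B_{m,n}(y)} \mid \F_m]| \le |\E[E_{(m,n]}(x) \overline{E_{(m,n]}(y)} \mid \F_m]| + |\E[E_{(m,n]}(x) \overline{E_{(m,n]}(y)} \1_{B_{m,n}(x)^c \cup B_{m,n}(y)^c} \mid \F_m]|,\]
and I will show each summand is $\lesssim 1$ on the event $B_{m-1,m}(x) \cap B_{m-1,m}(y)$.

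For the first term, independence of $Z_{(m,n]}$ from $\F_m$ reduces the conditional expectation to $\E[E_{(m,n]}(x) \overline{E_{(m,n]}(y)}]$. Applying Lemma~\ref{lemma:z_mgf} with $\xi_1 = \beta$, $\xi_2 = \overline{\beta}$ to the numerator and with $\xi_2 = 0$ to each denominator, the diagonal $\beta^2/2$ and $\overline{\beta}^2/2$ contributions cancel and one obtains
\[\E[E_{(m,n]}(x) \overline{E_{(m,n]}(y)}] = e^{|\beta|^2 \E Z_{(m,n]}(x) Z_{(m,n]}(y) + \varepsilon}, \qquad |\varepsilon| \le C.\]
Since $|x-y| \ge 2^{-m-1}$, estimate \eqref{eq:z_long_range_covariance} forces $\E Z_{(m,n]}(x) Z_{(m,n]}(y) = O(1)$, giving a uniform bound.

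For the second term I will use a stopping-time decomposition. Let $j_\ast$ be the smallest $j \in (m,n]$ for which $Z_j(x) \ge \alpha \E Z_j(x)^2$ or $Z_j(y) \ge \alpha \E Z_j(y)^2$ (set $j_\ast = \infty$ if no such $j$ exists). Then $\{j_\ast < \infty\}$ equals $B_{m,n}(x)^c \cup B_{m,n}(y)^c$, the events $\{j_\ast = j\}$ are disjoint and $\F_j$-measurable, and writing $E_{(m,n]} = E_{(m,j]} \cdot E_{(j,n]}$ the tower property yields
\[\E[E_{(m,n]}(x) \overline{E_{(m,n]}(y)} \1_{\{j_\ast = j\}} \mid \F_m] = \E[E_{(j,n]}(x) \overline{E_{(j,n]}(y)}] \cdot \E[E_{(m,j]}(x) \overline{E_{(m,j]}(y)} \1_{\{j_\ast = j\}} \mid \F_m].\]
The first factor has modulus $\lesssim 1$ by the argument of the previous paragraph applied to the block $(j,n]$, using $|x-y| \ge 2^{-m-1} \ge 2^{-j-1}$. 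The second factor is dominated in modulus by $\E[|E_{(m,j]}(x) \overline{E_{(m,j]}(y)}| \1_{\{j_\ast = j\}} \mid \F_m]$, and on $B_{m-1,m}(x) \cap B_{m-1,m}(y)$, the identity $Z_j = Z_m + Z_{(m,j]}$ together with $Z_m(x) < \alpha \E Z_m(x)^2$ and $\E Z_j(x)^2 - \E Z_m(x)^2 = \E Z_{(m,j]}(x)^2$ (and similarly for $y$) give the pointwise majorisation
\[\1_{\{j_\ast = j\}} \le \1_{\{Z_j(x) \ge \alpha \E Z_j(x)^2\}} + \1_{\{Z_j(y) \ge \alpha \E Z_j(y)^2\}} \le \1_{A_{(m,j]}(x)} + \1_{A_{(m,j]}(y)}.\]
Since $A_{(m,j]}(x)$ and $A_{(m,j]}(y)$ are independent of $\F_m$, Lemma~\ref{lemma:tail_field_estimate} together with its symmetric $y$-version (obtained by swapping the roles of $x$ and $y$ in the same proof) bounds each contribution by $\lesssim e^{((\Im \beta)^2 - \frac{(\alpha - \Re \beta)^2}{2}) \log(2)(j-m)}$, which decays geometrically by \eqref{eq:beta_imag_restriction}. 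Summing over $j \in (m,n]$ produces a convergent geometric series.

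The main obstacle to avoid is that a naive estimate $|\E[\cdot]| \le \E[|\cdot|]$ applied inside the conditional expectation annihilates the complex-phase cancellation between $E(x)$ and $\overline{E(y)}$: a direct computation shows $\E[|E_{(m,n]}(x) \overline{E_{(m,n]}(y)}|]$ grows like $e^{(\Im \beta)^2 \log(2)(n-m)}$, which is unbounded in $n-m$. The stopping-time decomposition is essential because it isolates the long tail block $(j,n]$ inside its own expectation, where Lemma~\ref{lemma:z_mgf} delivers an $O(1)$ estimate exploiting the full complex cancellation; absolute values are taken only on the short leading block $(m,j]$, where the exponential decay of Lemma~\ref{lemma:tail_field_estimate} overwhelms the residual $(\Im \beta)^2$ growth by exactly the margin guaranteed in \eqref{eq:beta_imag_restriction}.
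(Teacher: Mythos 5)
Your argument is correct and is essentially the paper's proof in a different arrangement: the paper establishes the one-step recursion $|\E[P_{k+1}\mid\F_m]| \le C\sigma_{k+1} + \rho_{k+1}|\E[P_k\mid\F_m]|$ and iterates it, which when unrolled is exactly your first-exit-time decomposition (your factor $|\E[E_{(j,n]}(x)\overline{E_{(j,n]}(y)}]|$ equals the paper's product $\prod_{k=j+1}^{n}\rho_k$, and your use of Lemma~\ref{lemma:tail_field_estimate} on the block $(m,j]$ matches the paper's bound on $\sigma_j$). Both rely on the same two ingredients — the geometric decay from Lemma~\ref{lemma:tail_field_estimate} under \eqref{eq:beta_imag_restriction} and the $O(1)$ control of $\E Z_{(j,n]}(x)Z_{(j,n]}(y)$ via Lemma~\ref{lemma:z_mgf} and \eqref{eq:z_long_range_covariance} — and your closing remark correctly identifies why taking absolute values on the full tail block would destroy the needed complex cancellation.
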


\begin{proof}
  Define
\[P_k = E_{(m,k]}(x) \overline{E_{(m,k]}(y)} \1_{B_{m,k}(x)} \1_{B_{m,k}(y)}\]
  for $k \ge m$.
  Then
  \begin{align*}
P_{k+1} & = E_{(m,k+1]}(x) \overline{E_{(m,k+1]}(y)} \1_{B_{m,k}(x)} \1_{B_{m,k}(y)} \1_{B_{k,k+1}(x)} \1_{B_{k,k+1}(y)} \\
        & = -E_{(m,k+1]}(x) \overline{E_{(m,k+1]}(y)} \1_{B_{m,k}(x)} \1_{B_{m,k}(y)} (1  - \1_{B_{k,k+1}(x)} \1_{B_{k,k+1}(y)}) \\
              & \quad + \frac{e^{\beta Y_{k+1}(x)}}{\E e^{\beta Y_{k+1}(x)}} \cdot \frac{e^{\overline{\beta} Y_{k+1}(y)}}{\E e^{\overline{\beta} Y_{k+1}(y)}} P_k.
  \end{align*}
  Hence we have
  \begin{align}\label{eq:inductive_inequality}
    & \1_{\{Z_m(x) < \alpha \E Z_m(x)^2\}} \1_{\{Z_m(y) < \alpha \E Z_m(y)^2\}}|\E[P_{k+1} | \F_m]| \nonumber \\
     & \le \1_{\{Z_m(x) < \alpha \E Z_m(x)^2\}} \E[|E_{(m,k+1]}(x) \overline{E_{(m,k+1]}(y)}| \1_{A_{k+1}(x)} | \F_m] \nonumber \\
     & \quad + \1_{\{Z_m(y) < \alpha \E Z_m(y)^2\}} \E[|E_{(m,k+1]}(x) \overline{E_{(m,k+1]}(y)}| \1_{A_{k+1}(y)} | \F_m] \\
     & \quad + \frac{|\E e^{\beta Y_{k+1}(x) + \overline{\beta} Y_{k+1}(y)}|}{|\E e^{\beta Y_{k+1}(x)} \E e^{\overline{\beta} Y_{k+1}(y)}|} \1_{\{Z_m(x) < \alpha \E Z_m(x)^2\}} \1_{\{Z_m(y) < \alpha \E Z_m(y)^2\}}|\E[P_k | \F_m]|. \nonumber
  \end{align}
  Notice that $A_{k+1}(x) \cap \{Z_m(x) < \alpha \E Z_m(x)^2\} \subset A_{(m,k+1]}$.
  This, Lemma~\ref{lemma:tail_field_estimate} and Lemma~\ref{lemma:z_covariance} give us
  \begin{align*}
    & \1_{\{Z_m(x) < \alpha \E Z_m(x)^2\}} \E[|E_{(m,k+1]}(x) \overline{E_{(m,k+1]}(y)}| \1_{A_{k+1}(x)} | \F_m] \\
    & \le \E[|E_{(m,k+1]}(x) \overline{E_{(m,k+1]}(y)}| \1_{A_{(m,k+1]}(x)}] \\
    & \lesssim e^{- \frac{(\alpha - \Re \beta)^2}{2} \log(2) (k+1-m)} e^{(\Im \beta)^2 \log(2) (k+1-m) + \alpha (\Re \beta) \E[Z_{(m,k+1]}(x) Z_{(m,k+1]}(y)]} \\
    & \lesssim e^{(-\frac{(\alpha - \Re \beta)^2}{2} + (\Im \beta)^2) \log(2) (k+1-m)} =: \sigma_{k+1}. 
  \end{align*}
  Similarly we get
  \[\1_{\{Z_m(y) < \alpha \E Z_m(y)^2\}} \E[|E_{(m,k+1]}(x) \overline{E_{(m,k+1]}(y)}| \1_{A_{k+1}(y)} | \F_m] \lesssim \sigma_{k+1}\]
  and because of \eqref{eq:beta_imag_restriction} the $\sigma_k$ decay exponentially.
  Moving on to the third term on the right hand side of \eqref{eq:inductive_inequality}, let
  \begin{align*}
    \frac{|\E e^{\beta Y_{k+1}(x) + \overline{\beta} Y_{k+1}(y)}|}{|\E e^{\beta Y_{k+1}(x)} \E e^{\overline{\beta} Y_{k+1}(y)}|} =: \rho_{k+1}.
  \end{align*}
  By Lemma~\ref{lemma:z_covariance} there exists a constant $A$ such that for all $m \le m' \le M$ we have
  \begin{align*}
  \prod_{k=m'+1}^M \rho_k & = \frac{|\E e^{\beta Z_{(m',M]}(x) + \overline{\beta} Z_{(m',M]}(y)}|}{|\E e^{\beta Z_{(m',M]}(x)}| |\E e^{\overline{\beta} Z_{(m',M]}(y)}|} \\
                          & = \frac{|\E e^{\beta Z_M(x) + \overline{\beta} Z_M(y)}| |\E e^{\beta Z_{m'}(x)}| |\E e^{\overline{\beta} Z_{m'}(y)}|}{|\E e^{\beta Z_M(x)}| |\E e^{\overline{\beta} Z_M(y)}| |\E e^{\beta Z_{m'}(x) + \overline{\beta} Z_{m'}(y)}|} \\
                          & \le c e^{|\beta|^2 (\E Z_M(x) Z_M(y) - \E Z_{m'}(x) Z_{m'}(y))} \le A,
\end{align*}
  where $c > 0$ is a constant. We have thus verified that
  \begin{align*}
    & \1_{B_{m-1,m}(x)} \1_{B_{m-1,m}(y)} |\E[P_{k+1} | \F_m]| \\
    & \le C \sigma_{k+1} + \rho_{k+1} \1_{B_{m-1,m}(x)} \1_{B_{m-1,m}(y)} |\E[P_k | \F_m]|
  \end{align*}
  for some constant $C > 0$, and it is easy to see that we then have
  \[\1_{\{Z_m(x) < \alpha \E Z_m(x)^2\}} \1_{\{Z_m(y) < \alpha \E Z_m(y)^2\}} |\E[P_k | \F_m] \le C A (1 + \sum_{j=m+1}^\infty \sigma_j)\]
  for all $k \ge m$.
  Taking $k=n$ proves the claim.
\end{proof}

In the next lemma we bound the main contribution coming from points $x,y \in I$ that are at a given dyadic distance $2^{-m}$ from each other.

\begin{lemma}\label{lemma:l2_outer_estimate}
  Let $m \ge l+1$.
  If $|x-y| \le 2^{-m}$, we have
  \begin{align*}
  & \1_{A_l(x)} \1_{A_l(y)} \E[|E_{(l,m]}(x) \overline{E_{(l,m]}(y)}| \1_{B_{l,m}(x)} \1_{B_{l,m}(y)} | \F_l] \\
  & \lesssim e^{\big((\Re \beta)^2 - \frac{(2\Re \beta - \alpha)^2}{2}\big) \log(2) (m-l) + (\Im \beta)^2 \log(2) (m-l)}.
  \end{align*}
\end{lemma}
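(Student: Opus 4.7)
The plan is to follow the template of Lemma~\ref{lemma:tail_field_estimate}, with the key difference being that here we exploit the indicator forcing $Z_{(l,m]}$ to be \emph{small} rather than large at both $x$ and $y$.

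First, since $Z_{(l,m]}$ is independent of $\F_l$, the denominators defining $|E_{(l,m]}(x)\overline{E_{(l,m]}(y)}|$ are deterministic numbers, and Lemma~\ref{lemma:z_mgf} yields
\[|E_{(l,m]}(x) \overline{E_{(l,m]}(y)}| \lesssim e^{(\Re \beta)(Z_{(l,m]}(x) + Z_{(l,m]}(y)) - \frac{(\Re \beta)^2 - (\Im \beta)^2}{2}(\E Z_{(l,m]}(x)^2 + \E Z_{(l,m]}(y)^2)}.\]
Next, on $A_l(x) \cap B_{l,m}(x)$ the inequalities $Z_l(x) \ge \alpha \E Z_l(x)^2$ and $Z_m(x) < \alpha \E Z_m(x)^2$ combine (via independence and $Z_{(l,m]}(x) = Z_m(x) - Z_l(x)$) to give $Z_{(l,m]}(x) < \alpha \E Z_{(l,m]}(x)^2$, and analogously for $y$. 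Each resulting indicator I would then estimate by the Markov-type bound $\1_{\{Z_{(l,m]}(x) < \alpha \E Z_{(l,m]}(x)^2\}} \le e^{c(\alpha \E Z_{(l,m]}(x)^2 - Z_{(l,m]}(x))}$ with a parameter $c > 0$ to be optimized.

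Because the $(l,m]$-subscripted fields are independent of $\F_l$, the conditional expectation collapses to an unconditional one. A second application of Lemma~\ref{lemma:z_mgf} (with $\xi_1 = \xi_2 = \Re \beta - c$) then bounds the result by a constant times
\[e^{\bigl[c\alpha - \frac{(\Re \beta)^2 - (\Im \beta)^2}{2} + \frac{(\Re \beta - c)^2}{2}\bigr](\E Z_{(l,m]}(x)^2 + \E Z_{(l,m]}(y)^2) + (\Re \beta - c)^2 \E Z_{(l,m]}(x) Z_{(l,m]}(y)}.\]

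Invoking Lemma~\ref{lemma:z_covariance} with $|x-y| \le 2^{-m}$ gives $\E Z_{(l,m]}(x)^2 = \E Z_{(l,m]}(y)^2 = \E Z_{(l,m]}(x) Z_{(l,m]}(y) = \log(2)(m-l) + O(1)$, so the total exponent per $\log(2)(m-l)$ reduces to the quadratic $f(c) = -(\Re \beta)^2 + (\Im \beta)^2 + 2c\alpha + 2(\Re \beta - c)^2$ in the free parameter $c$. This is minimized at $c = \Re \beta - \alpha/2$, which is positive by the standing assumption $\alpha < 2\Re \beta$ and satisfies $|\Re \beta - c| = \alpha/2 < \sqrt{2d} < R$, as required for the applicability of Lemma~\ref{lemma:z_mgf}. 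A direct computation then gives $f(c) = -(\Re \beta)^2 + 2\alpha \Re \beta - \tfrac{\alpha^2}{2} + (\Im \beta)^2 = (\Re \beta)^2 - \tfrac{(2\Re \beta - \alpha)^2}{2} + (\Im \beta)^2$, matching the claimed bound. The only real obstacle is guessing the correct Markov parameter $c$; once it is chosen, the remainder of the argument is routine bookkeeping of Gaussian-like exponents.
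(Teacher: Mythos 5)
Your proposal is correct and follows essentially the same route as the paper: bound $|E_{(l,m]}(x)\overline{E_{(l,m]}(y)}|$ via Lemma~\ref{lemma:z_mgf}, combine $\1_{A_l}\1_{B_{l,m}}$ into $\1_{\{Z_{(l,m]} < \alpha \E Z_{(l,m]}^2\}}$, apply a Chernoff-type bound, and evaluate the resulting Laplace transform with Lemma~\ref{lemma:z_covariance}. The paper simply writes down the tilt parameter $\Re\beta - \alpha/2$ directly rather than deriving it by optimizing over $c$, and your computations reproduce its exponent exactly.
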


\begin{proof}
  By \eqref{eq:modulus_of_E_product} we have
  \[|E_{(l,m]}(x) \overline{E_{(l,m]}(y)}| \lesssim e^{(\Re \beta) (Z_{(l,m]}(x) + Z_{(l,m]}(y)) + ((\Im \beta)^2 - (\Re \beta)^2) \log(2) (m-l)}.\]
  On the other hand
  \begin{align*}
    \1_{A_l(x)} \1_{B_{l,m}(x)} & \le \1_{\{Z_l(x) \ge \alpha \E Z_l(x)^2\}} \1_{\{Z_m(x) < \alpha \E Z_m(x)^2\}} \\
                                & \le \1_{\{Z_{(l,m]}(x) < \alpha \E Z_{(l,m]}(x)^2\}} \\
                                & \le e^{(\Re \beta - \frac{\alpha}{2}) \alpha \E Z_{(l,m]}(x)^2 - (\Re \beta - \frac{\alpha}{2})Z_{(l,m]}(x)}
  \end{align*}
  and similarly for $\1_{A_l(y)} \1_{B_{l,m}(y)}$.
  By Lemma~\ref{lemma:z_covariance} we therefore have
  \begin{align*}
  & \1_{A_l(x)} \1_{A_l(y)} \E[|E_{(l,m]}(x) \overline{E_{(l,m]}(y)}| \1_{B_{l,m}(x)} \1_{B_{l,m}(y)} | \F_l] \\
  & \lesssim e^{((\Im \beta)^2 - (\Re \beta)^2) \log(2) (m-l) + (2 \Re \beta - \alpha) \alpha \log(2) (m-l)} \E[e^{\frac{\alpha}{2}(Z_{(l,m]}(x) + Z_{(l,m]}(y))}] \\
  & \lesssim e^{((\Im \beta)^2 - (\Re \beta)^2) \log(2) (m-l) + (2 \Re \beta - \alpha) \alpha \log(2) (m-l) + \frac{\alpha^2}{2} \log(2) (m-l)},
  \end{align*}
  from which the claim follows.
\end{proof}

The following proposition encodes our fundamental $L^2$-estimate.

\begin{proposition}\label{prop:l2_estimate}
  Let $I_{l,i}$ be a dyadic subcube of $I$ with diameter at most $2^{-l-1}$.
  Then for all $n \ge l$ and $f \in C(I,\complexes)$ we have
  \[\E\left[\left| \int_{I_{l,i}} f(x) E_n(x) \1_{A_l(x)} \1_{B_{l,n}(x)} \, dx \right|^2 \Big\vert \F_l \right] \lesssim 2^{-2dl} \|f\|^2_\infty \big(\sup_{x \in I_{l,i}} |E_l(x)|^2 \1_{A_l(x)}\big).\]
\end{proposition}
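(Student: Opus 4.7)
The plan is to expand the square as a double integral via Fubini, dyadically decompose $I_{l,i} \times I_{l,i}$ according to the distance $|x-y|$, and on each shell reduce the estimate to Lemmas \ref{lemma:l2_inner_estimate} and \ref{lemma:l2_outer_estimate} by a nested conditioning argument. Concretely, setting $G(x,y) := \E[E_n(x)\overline{E_n(y)} \1_{A_l(x)} \1_{A_l(y)} \1_{B_{l,n}(x)} \1_{B_{l,n}(y)} | \F_l]$, we have
\[
\E\Big[\Big|\int_{I_{l,i}} f(x) E_n(x) \1_{A_l(x)} \1_{B_{l,n}(x)} \, dx \Big|^2 \Big| \F_l\Big] = \int_{I_{l,i}^2} f(x) \overline{f(y)} G(x,y) \, dx\,dy,
\]
and since $\operatorname{diam}(I_{l,i}) \le 2^{-l-1}$, every pair $(x,y) \in I_{l,i}^2$ lies in some dyadic shell $D_m := \{(x,y) : 2^{-m-1} \le |x-y| \le 2^{-m}\}$ with $m \ge l+1$.

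For fixed $(x,y) \in D_m$ I would use the factorizations $E_n = E_l \cdot E_{(l,m]} \cdot E_{(m,n]}$ and $B_{l,n}(\cdot) = B_{l,m}(\cdot) \cap B_{m,n}(\cdot)$. Conditioning first on $\F_m$, the $\F_l$- and $\F_m$-measurable factors $E_l \cdot E_{(l,m]}$ and $\1_{A_l} \cdot \1_{B_{l,m}}$ pull out, and since $B_{l,m}(\cdot) \subset B_{m-1,m}(\cdot)$, Lemma \ref{lemma:l2_inner_estimate} gives $|\E[E_{(m,n]}(x)\overline{E_{(m,n]}(y)} \1_{B_{m,n}(x)} \1_{B_{m,n}(y)} | \F_m]| \lesssim 1$ on $B_{l,m}(x) \cap B_{l,m}(y)$. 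Taking absolute values and then conditioning on $\F_l$, Lemma \ref{lemma:l2_outer_estimate} yields
\[
\1_{A_l(x)} \1_{A_l(y)} \E\big[|E_{(l,m]}(x) \overline{E_{(l,m]}(y)}| \1_{B_{l,m}(x)} \1_{B_{l,m}(y)} \big| \F_l\big] \lesssim 2^{\gamma(m-l)},
\]
where $\gamma := (\Re\beta)^2 - \frac{(2\Re\beta-\alpha)^2}{2} + (\Im\beta)^2$. Bounding $|E_l(x)||E_l(y)|\1_{A_l(x)}\1_{A_l(y)}$ by $\sup_{z \in I_{l,i}} |E_l(z)|^2 \1_{A_l(z)}$ via the arithmetic--geometric mean inequality produces $|G(x,y)| \lesssim \sup_z |E_l(z)|^2 \1_{A_l(z)} \cdot 2^{\gamma(m-l)}$ on $D_m$.

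Summing over $m$ with $\operatorname{vol}(D_m) \lesssim 2^{-dl} \cdot 2^{-dm}$ and $|f| \le \|f\|_\infty$ gives a bound of
\[
\|f\|_\infty^2 \sup_{z \in I_{l,i}} |E_l(z)|^2 \1_{A_l(z)} \cdot 2^{-dl} \sum_{m \ge l+1} 2^{(\gamma - d)(m-l)}.
\]
The second clause of \eqref{eq:beta_imag_restriction} is exactly $\gamma < d$, so the geometric series converges and contributes another factor of $2^{-dl}$ up to constants, producing the desired $2^{-2dl}$. The most delicate step is the bookkeeping of the nested conditioning: one must verify that the indicator factorization $B_{l,n} = B_{l,m} \cap B_{m,n}$ and the containment $B_{l,m} \subset B_{m-1,m}$ work correctly so that Lemmas \ref{lemma:l2_inner_estimate} and \ref{lemma:l2_outer_estimate} can be applied verbatim inside the conditional expectations.
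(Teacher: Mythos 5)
Your proof is correct and follows essentially the same route as the paper: expand the square as a double integral, decompose $I_{l,i}^2$ into dyadic shells, condition on $\F_m$ inside $\F_l$, and apply Lemmas~\ref{lemma:l2_inner_estimate} and~\ref{lemma:l2_outer_estimate} before summing the geometric series via \eqref{eq:beta_imag_restriction}. Two cosmetic points: the shells must be truncated at $m=n$ (the paper lumps all pairs with $|x-y|\le 2^{-n}$ into the last shell, with the convention $E_{(n,n]}=1$, since your factorization $E_n=E_l E_{(l,m]}E_{(m,n]}$ needs $m\le n$), and the second factor of $2^{-dl}$ comes from writing $2^{-dm}=2^{-dl}2^{-d(m-l)}$ in the volume bound rather than from the geometric series, which only contributes $O(1)$.
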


\begin{proof}
  Let us partition the set $I_{l,i}^2$ into sets $\mathcal{D}_m$, $l+1 \le m \le n$, by setting
  \[\mathcal{D}_m = \begin{cases}\{(x,y) \in I_{l,i}^2 : 2^{-m-1} \le |x-y| \le 2^{-m}\}, & \text{when } l+1 \le m \le n-1 \\
                                 \{(x,y) \in I_{l,i}^2 : |x-y| \le 2^{-n}\}, & \text{when } m=n.
                    \end{cases}\]
  Then one can write
  \begin{align*}
    & \E\left[\left| \int_{I_{l,i}} f(x) E_n(x) \1_{A_l(x)} \1_{B_{l,n}(x)} \, dx \right|^2 \Big\vert \F_l \right] \\
    & = \int_{I_{l,i}} \int_{I_{l,i}} f(x) \overline{f(y)} \E\big[E_n(x) \overline{E_n(y)} \1_{A_l(x)} \1_{A_l(y)} \1_{B_{l,n}(x)} \1_{B_{l,n}(y)} \big\vert \F_l \big] \, dx \, dy \\
    & = \int_{I_{l,i}} \int_{I_{l,i}} f(x) \overline{f(y)} E_l(x) \overline{E_l(y)} \1_{A_l(x)} \1_{A_l(y)} \times \\
    & \quad \E\big[E_{(l,n]}(x) \overline{E_{(l,n]}(y)} \1_{B_{l,n}(x)} \1_{B_{l,n}(y)} \big\vert \F_l \big] \, dx \, dy \\
    & = \sum_{m=l+1}^n \int_{(x,y) \in \mathcal{D}_m} f(x) \overline{f(y)} E_l(x) \overline{E_l(y)} \1_{A_l(x)} \1_{A_l(y)} \times \\
    & \quad \E\Big[E_{(l,m]}(x) \overline{E_{(l,m]}(y)} \1_{B_{l,m}(x)} \1_{B_{l,m}(y)} \times \\
    & \quad \E\big[E_{(m,n]}(x) \overline{E_{(m,n]}(y)} \1_{B_{m,n}(x)} \1_{B_{m,n}(y)} \big| \F_m\big] \Big| \F_l\Big] \\
    & \le \|f\|^2_\infty \big(\sup_{x \in I_{l,i}} |E_l(x)|^2 \1_{A_l(x)}\big) \times \\
    & \quad \sum_{m=l+1}^n \int_{(x,y) \in \mathcal{D}_m} \1_{A_l(x)} \1_{A_l(y)} \E\bigg[\big|E_{(l,m]}(x) \overline{E_{(l,m]}(y)}\big| \1_{B_{l,m}(x)} \1_{B_{l,m}(y)} \times \\
    & \quad \Big|\E\big[E_{(m,n]}(x) \overline{E_{(m,n]}(y)} \1_{B_{m,n}(x)} \1_{B_{m,n}(y)} \big| \F_m\big] \Big| \bigg\vert \F_l\bigg].
  \end{align*}
  Here we use the convention that $E_{(n,n]}(x) = E_{(n,n]}(y) = \1_{B_{n,n}(x)} = \1_{B_{n,n}(y)} = 1$.
  By Lemma~\ref{lemma:l2_inner_estimate}, Lemma~\ref{lemma:l2_outer_estimate}, and \eqref{eq:beta_imag_restriction} we have
  \begin{align*}
    & \sum_{m=l+1}^n \int_{(x,y) \in \mathcal{D}_m} \1_{A_l(x)} \1_{A_l(y)} \E\big[|E_{(l,m]}(x) \overline{E_{(l,m]}(y)}| \1_{B_{l,m}(x)} \1_{B_{l,m}(y)} \times \\
    & \quad \1_{B_{l,m}(x)} \1_{B_{l,m}(y)} |\E[E_{(m,n]}(x) \overline{E_{(m,n]}(y)} \1_{B_{m,n}(x)} \1_{B_{m,n}(y)} | \F_m]| \big\vert \F_l\big] \\
    & \lesssim  \sum_{m=l+1}^n \int_{(x,y) \in \mathcal{D}_m} e^{\big((\Re \beta)^2 - \frac{(2\Re \beta - \alpha)^2}{2}\big) \log(2) (m-l) + (\Im \beta)^2 \log(2) (m-l)} \\
    & \lesssim  \sum_{m=l+1}^n 2^{-md - ld} e^{\big((\Re \beta)^2 - \frac{(2\Re \beta - \alpha)^2}{2}\big) \log(2) (m-l) + (\Im \beta)^2 \log(2) (m-l)} \\
    & = 2^{-2ld} \sum_{j=1}^{n-l} e^{\big((\Re \beta)^2 - \frac{(2\Re \beta - \alpha)^2}{2}\big) \log(2) j + (\Im \beta)^2 \log(2) j - \log(2) j d} \\
    & \lesssim 2^{-2ld}. \qedhere
  \end{align*}
\end{proof}

Next we wish to bound the term $\sup_{x \in I_{l,i}} |E_l(x)| \1_{A_l(x)}$ that appears after Proposition~\ref{prop:l2_estimate} has been applied.
We come to our second main estimate.

\begin{proposition}\label{prop:sup_estimate}
  For sufficiently small $p > 1$ we have
  \[\E \sup_{x \in J} |E_l(x)|^p \1_{A_l(x)} \lesssim e^{-\varepsilon l}\]
  for some $\varepsilon > 0$.
  Here $J$ is a dyadic subcube of $I$ with side length proportional to $2^{-l}$.
  The estimate does not depend on $i$.
\end{proposition}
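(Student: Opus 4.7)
The approach is to combine a pointwise Markov-shift estimate with a discretisation of $J$. For the pointwise step, Lemma~\ref{lemma:z_mgf} first gives $|E_l(x)|^p \lesssim e^{p(\Re \beta) Z_l(x) - p \frac{(\Re \beta)^2 - (\Im \beta)^2}{2} \log(2) l}$; then, using the shift $\1_{A_l(x)} \le e^{(\alpha - p \Re \beta)(Z_l(x) - \alpha \log(2) l)}$ (valid since $\alpha > \Re \beta$ implies $\alpha > p\Re \beta$ for $p$ close to $1$) together with Lemma~\ref{lemma:z_mgf} applied to $\E e^{\alpha Z_l(x)} \lesssim e^{\alpha^2 \log(2) l / 2}$, I would obtain
\[
\E |E_l(x)|^p \1_{A_l(x)} \lesssim e^{-\varepsilon_0 l}, \qquad \varepsilon_0 := \tfrac{\log 2}{2} \bigl( \alpha^2 - 2 p \alpha \Re \beta + p (\Re \beta)^2 - p (\Im \beta)^2 \bigr).
\]
At $p=1$ the exponent collapses to $\tfrac{\log 2}{2}\bigl((\alpha - \Re \beta)^2 - (\Im \beta)^2\bigr)$, which is strictly positive by the first inequality in \eqref{eq:beta_imag_restriction}; hence $\varepsilon_0 > 0$ for $p$ sufficiently close to $1$ by continuity.

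To pass from pointwise to supremum, I would fix $\eta > 0$ with $\eta d \log 2 < \varepsilon_0$ and cover $J$ by a grid $G$ of mesh $2^{-(1+\eta) l}$, so that $|G| \lesssim 2^{\eta l d}$. The continuity condition \eqref{eq:continuity} applied to the rescaled field $\tilde Z_l(u) := Z_l(x_J + 2^{-l} u)$ on $[0,1]^d$ gives $\E |\tilde Z_l(u) - \tilde Z_l(v)|^r \lesssim |u-v|^r$ for all $r \ge r_0$, and a Kolmogorov/Garsia--Rodemich--Rumsey chaining argument then yields
\[
\E R^r \lesssim 2^{-\eta l (r - d)}, \qquad R := \sup_{\substack{x, y \in J \\ |x - y| \le 2^{-(1+\eta) l}}} |Z_l(x) - Z_l(y)|,
\]
so that $\P(R > 1)$ is smaller than any prescribed negative power of $2^l$ once $r$ is taken large.

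I would split the expectation according to $\{R \le 1\}$ versus $\{R > 1\}$. On the good event every $x \in J$ satisfies $|E_l(x)|^p \lesssim |E_l(g(x))|^p$ and $\1_{A_l(x)} \le \1_{\{Z_l(g(x)) \ge \alpha \log(2) l - 1\}}$, where $g(x) \in G$ is the nearest grid point, so a union bound over $G$ combined with the pointwise estimate (the $O(1)$ shift of $\alpha$ affects only the implied constant) gives
\[
\E\bigl[ \sup_{x \in J} |E_l(x)|^p \1_{A_l(x)} \1_{\{R \le 1\}} \bigr] \lesssim |G|\cdot e^{-\varepsilon_0 l} \lesssim e^{-(\varepsilon_0 - \eta d \log 2)\, l}.
\]
On the bad event $\{R > 1\}$, a Hölder step produces $(\E \sup_x |E_l(x)|^{ps})^{1/s} \P(R > 1)^{1/s'}$, and a crude $e^{C(s) l}$ bound for the first factor follows by writing $\sup_x Z_l(x) \le \max_{g \in G} Z_l(g) + R$ and applying \eqref{eq:X_exponential_moments} at grid points; the super-polynomial smallness of $\P(R > 1)$ then dominates $e^{C(s)l}$ by taking $r$ large.

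The main technical difficulty is this bad-event estimate, since in the non-Gaussian setting there is no Borell--TIS-type sub-Gaussian tail bound available for $\sup_{x \in J} Z_l(x)$. The workaround is that only a crude in-$l$ upper bound for $\E \sup_x |E_l(x)|^{ps}$ is needed, and this is supplied by the exponential moment assumption \eqref{eq:X_exponential_moments} together with the grid-plus-modulus decomposition from Step~2; the chaining estimate on $R$ can then be tuned (by taking $r$ as large as required) to kill the crude factor and leave a genuinely exponential decay $e^{-\varepsilon l}$.
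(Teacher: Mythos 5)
Your pointwise estimate is correct and coincides with the paper's reduction: the Chernoff shift $\1_{A_l(x)} \le e^{(\alpha - p\Re\beta)(Z_l(x)-\alpha \E Z_l(x)^2)}$ combined with Lemma~\ref{lemma:z_mgf} gives, at $p=1$, the exponent $\tfrac{\log 2}{2}\bigl((\alpha-\Re\beta)^2-(\Im\beta)^2\bigr)>0$ by \eqref{eq:beta_imag_restriction}, and the good-event part of your single-scale grid argument is sound. The gap is in the bad event. You need a crude bound $\E \sup_{x\in J}|E_l(x)|^{ps}\le e^{C(s)l}$ and propose to obtain it from $\sup_x Z_l(x)\le \max_{g\in G} Z_l(g)+R$ together with \eqref{eq:X_exponential_moments}. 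The grid maximum is indeed harmless (union bound over $|G|\lesssim 2^{\eta l d}$ points plus Lemma~\ref{lemma:z_mgf}), but the decomposition then forces you to control $\E e^{\lambda R}$ for some $\lambda>0$ after a further H\"older step, and the hypotheses only supply polynomial moments of $R$: condition \eqref{eq:continuity} gives $\E R^r \lesssim C_r 2^{-\eta l (r-d)}$ with constants $C_r$ that may grow arbitrarily fast in $r$, so no exponential integrability of the modulus of continuity follows. Layering over $\{j<R\le j+1\}$ does not rescue this either, since the price $e^{\lambda j}$ on layer $j$ beats the polynomial-in-$j$ decay of $\P(R>j)$ at any fixed $r$. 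In the non-Gaussian setting there is no Borell--TIS substitute, so this is precisely where the whole difficulty of the proposition sits, and your workaround as stated does not close it.

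The paper's resolution is to chain the \emph{exponential} of the field rather than the field itself: over the dyadic tree inside $J$ one writes $e^{\alpha \sup_{x\in J} Z_l(x)} \le e^{\alpha Z_l(x_{0,1})} + \sum_{m\ge1}\bigl(\sum_i |e^{\alpha Z_l(x_{m,i})}-e^{\alpha Z_l(\pi(x_{m,i}))}|^r\bigr)^{1/r}$, applies $|e^a-e^b|^r\le|a-b|^r(e^{ra}+e^{rb})$, and then H\"older splits each term into a polynomial increment moment (controlled by \eqref{eq:continuity}) times a pointwise exponential moment (controlled by Lemma~\ref{lemma:z_mgf}). This yields $\E e^{\alpha\sup_J Z_l}\lesssim e^{\frac{rs\alpha^2}{2}\log(2)l}$ with $r,s$ arbitrarily close to $1$, which is exactly the crude-but-sufficient bound your bad event requires --- and in fact, once you have it, the good/bad splitting and the single-scale grid become unnecessary, since this estimate plugged directly into your pointwise reduction gives the proposition. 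Replacing your bad-event step by this telescoping argument repairs the proof.
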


\begin{proof}
  We may assume that $p > 1$ is so small that $\alpha - p \Re \beta > 0$.
  Then
  \[|E_l(x)|^p \lesssim e^{p(\Re \beta) Z_l(x) - p \frac{(\Re \beta)^2 - (\Im \beta)^2}{2} \E Z_l(x)^2}\]
  and
  \[\1_{A_l(x)} \le e^{(\alpha - p \Re \beta) Z_l(x) - (\alpha - p \Re \beta) \alpha \E Z_l(x)^2},\]
  so
  \[\sup_{x \in J} |E_l(x)|^p \1_{A_l(x)} \lesssim e^{\alpha \sup_{x \in J} Z_l(x) - p \frac{(\Re \beta)^2 - (\Im \beta)^2}{2} \E Z_l(x)^2 - (\alpha - p \Re \beta) \alpha \E Z_l(x)^2}.\]
  Let $J$ have side length $2^{-\ell} \approx 2^{-l}$ and for all $m \ge 0$ let $D_m$ be the collection of dyadic subcubes of $J$ with side length $2^{-\ell-m}$.
  Choose a point $x_{m,i}$, $i=1,\dots,2^m$, from each cube in $D_m$.
  For example, one can take $x_{m,i}$ to be the center of its corresponding cube.
  Then for any $x \in J$ there exists a sequence of points $x_{m,i_m}$ converging to $x$ such that $x_{m,i_m}$ is chosen from inside the cube of $x_{m-1,i_{m-1}}$.
  Let $\pi(x_{m,i})$ be the point chosen from the parent cube of $x_{m,i}$.
  Now by continuity of $Z_l$ we have
  \begin{align*}
    e^{\alpha Z_l(x)} & \le e^{\alpha Z_l(x_{0,1})} + \sum_{m=1}^\infty |e^{\alpha Z_l(x_{m,i_m})} - e^{\alpha Z_l(x_{m-1,i_{m-1}})}| \\
                      & \le e^{\alpha Z_l(x_{0,1})} + \sum_{m=1}^\infty \sup_{i \in \{1,\dots,2^m\}} |e^{\alpha Z_l(x_{m,i})} - e^{\alpha Z_l(\pi(x_{m,i}))}|.
  \end{align*}
  Since the right hand side does not depend on $x$, we have
  \begin{align*}
    e^{\alpha \sup_{x \in J} Z_l(x)} & \le e^{\alpha Z_l(x_{0,1})} + \sum_{m=1}^\infty \sup_{i \in \{1, \dots, 2^m\}} |e^{\alpha Z_l(x_{m,i})} - e^{\alpha Z_l(\pi(x_{m,i}))}| \\
                                           & \le e^{\alpha Z_l(x_{0,1})} + \sum_{m=1}^\infty \Big( \sum_{i=1}^{2^m} |e^{\alpha Z_l(x_{m,i})} - e^{\alpha Z_l(\pi(x_{m,i}))}|^r \Big)^{1/r} \\
                                           & \le e^{\alpha Z_l(x_{0,1})} + \sum_{m=1}^\infty \Big( \sum_{i=1}^{2^m} |\alpha Z_l(x_{m,i}) - \alpha Z_l(\pi(x_{m,i}))|^r \times \\
        & \quad (e^{r \alpha Z_l(x_{m,i})} + e^{r \alpha Z_l(\pi(x_{m,i}))}) \Big)^{1/r},
  \end{align*}
  where $r > 1$.
  Taking the expectation and using Jensen's inequality gives
  \begin{align*}
    \E e^{\alpha \sup_{x \in J} Z_l(x)} & \le \E e^{\alpha Z_l(x_{0,1})} + \alpha \sum_{m=1}^\infty \Big( \sum_{i=1}^{2^m} \E\big[|Z_l(x_{m,i}) - Z_l(\pi(x_{m,i}))|^r \times \\
                                                  & \quad (e^{r \alpha Z_l(x_{m,i})} + e^{r \alpha Z_l(\pi(x_{m,i}))})\big] \Big)^{1/r}
  \end{align*}
  Moreover, by Hölder's inequality
  \begin{align*}
    & \E\big[|Z_l(x_{m,i}) - Z_l(\pi(x_{m,i}))|^r e^{r \alpha Z_l(x_{m,i})}] \\
    & \le \Big(\E |Z_l(x_{m,i}) - Z_l(\pi(x_{m,i}))|^{\frac{rs}{s-1}}\Big)^{\frac{s-1}{s}} \Big(\E e^{rs \alpha Z_l(x_{m,i})}\Big)^{1/s} \\
    & \lesssim \Big(\E |Z_l(x_{m,i}) - Z_l(\pi(x_{m,i}))|^{\frac{rs}{s-1}} \Big)^{\frac{s-1}{s}} e^{\frac{r^2 s \alpha^2}{2} \E Z_l(x_{m,i})^2},
  \end{align*}
  assuming that $r>1$ and $s>1$ are small.
  By condition \eqref{eq:continuity} we have
  \[\E |Z_j(x) - Z_j(y)|^q \le C_q 2^{jq} |x-y|^q\]
  for all $x,y \in I$ and $q \ge 2$ large enough, which gives us for $s$ close enough to $1$ that
  \begin{align*}
    \Big(\E |Z_l(x_{m,i}) - Z_l(\pi(x_{m,i}))|^{\frac{rs}{s-1}} \Big)^{\frac{s-1}{s}} \lesssim 2^{r l} |x_{m,i} - \pi(x_{m,i})|^{r} \lesssim 2^{-rm}.
  \end{align*}
  Hence we have
  \[\E\big[|Z_l(x_{m,i}) - Z_l(\pi(x_{m,i}))|^r e^{r \alpha Z_l(x_{m,i})}] \lesssim 2^{-rm} e^{\frac{r^2 s \alpha^2}{2} \log(2) l}.\]
  The same estimate holds also for $\E\big[|Z_l(x_{m,i}) - Z_l(\pi(x_{m,i}))|^r e^{r \alpha Z_l(\pi(x_{m,i}))}]$, so we get
  \begin{align*}
    \E e^{\alpha \sup_{x \in J} Z_l(x)} & \lesssim e^{\frac{\alpha^2}{2} \log(2) l} + \alpha \sum_{m=1}^\infty \Big( 2^m 2^{-rm} e^{\frac{r^2 s \alpha^2}{2} \log(2) l} \Big)^{1/r} \\
                                              & = e^{\frac{\alpha^2}{2} \log(2) l} + \alpha e^{\frac{rs \alpha^2}{2} \log(2) l} \sum_{m=1}^\infty 2^{m(\frac{1}{r} - 1)} \\
                                              & \lesssim e^{\frac{rs \alpha^2}{2} \log(2) l}.
  \end{align*}
  Putting everything together we have
  \[\E \sup_{x \in J} |E_l(x)|^p \1_{A_l(x)} \lesssim e^{\frac{rs \alpha^2}{2} \log(2) l - p \frac{(\Re \beta)^2 - (\Im \beta)^2}{2} \E Z_l(x)^2 - (\alpha - p \Re \beta) \alpha \E Z_l(x)^2},\]
  and by choosing $r$, $s$ and $p$ sufficiently close to $1$, we may make the exponent as close to
  \[\Big[-\frac{(\alpha - \Re \beta)^2}{2} + \frac{(\Im \beta)^2}{2}\Big] \log(2) l\]
  as we wish, which proves the claim.
\end{proof}

Having proved all the auxiliary results we need, we will now finish with the proof of Theorem~\ref{thm:main_result}.
Let $n \ge 1$ and set $N = t_n - 1$.
We may then write $\mu_N(f;\beta)$ as the sum
\[\mu_N(f;\beta) = \sum_{l=0}^n \int_I f(x) E_n(x) \1_{A_l(x)} \1_{B_{l,n}(x)} \, dx.\]
Here the $l$th term of the sum contains the contribution from those points for which the last time the field is exceptionally large is $l$.
By Minkowski's and Jensen's inequalities it follows that for $p \in (1,2)$ we have
\begin{align*}
  & \Vert \mu_N(f;\beta)\Vert_{L^p(\Omega)} \\
  & \le \sum_{l=0}^n \sum_{i=0}^{c 2^{d l}}\left\Vert \int_{I_{l,i}} f(x) E_n(x) \1_{A_l(x)} \1_{B_{l,n}(x)} \, dx \right\Vert_{L^p(\Omega)} \\
  & \le \sum_{l=0}^n \sum_{i=0}^{c 2^{d l}} \left(\E\left[ \left(\E\left[\left|\int_{I_{l,i}} f(x) E_n(x) \1_{A_l(x)} \1_{B_{l,n}(x)} \, dx\right|^2 \Big\vert \F_l\right] \right)^{p/2} \right]\right)^{1/p},
\end{align*}
where $c > 0$ is a constant and for fixed $l$ the sets $I_{l,i} \subset I$ are dyadic subcubes of $I$ with side length $2^{-\ell}$, $\ell = l + c'$, where $c' > 0$ is a constant depending on $d$ that ensures that the diameter of $I_{l,i}$ which has length $\sqrt{d} 2^{-\ell}$ is at most $2^{-l-1}$.
By Proposition~\ref{prop:l2_estimate} and Proposition~\ref{prop:sup_estimate} we obtain for small enough $p$ that
\begin{align*}
  \Vert \mu_N(f;\beta)\Vert_{L^p(\Omega)} & \lesssim \|f\|_\infty \sum_{l=0}^n \sum_{i=0}^{c 2^{d l}} 2^{-dl} \big(\E[\sup_{x \in I_{l,i}} |E_l(x)|^p \1_{A_l(x)}]\big)^{1/p} \\
                                          & \lesssim \|f\|_\infty \sum_{l=0}^n \sum_{i=0}^{c 2^{dl}} 2^{-dl} e^{-\varepsilon l / p} \lesssim \|f\|_\infty.
\end{align*}
Thus we have proven the $L^p$ boundedness along the subsequence $\mu_{t_n - 1}$, and since the $L^p$ norms of a martingale are increasing, it follows that the whole sequence $\mu_n$ has bounded $L^p$ norms.
It is clear that the $p>1$ for which we get the bound depends continuously on $\beta$.
We obtain thus an open set $U$ containing $(0,\sqrt{2d})$ such that for any compact $K \subset U$ there exists $p > 1$ for which we have the uniform $L^p(\Omega)$-boundedness of $\mu_n(f;\beta)$, $\beta \in K$.

To get rid of the assumption that $\delta > \sqrt{d}$, we can partition $I$ into a finite number of dyadic cubes $I_k$, $1 \le k \le m$, with diameter less than $\delta$.
By Minkowski's inequality we have
\[\Vert\mu_n(f;\beta)\Vert_{L^p(\Omega)} \le \sum_{k=1}^m \Vert \mu_n(f\chi_{I_k};\beta) \Vert_{L^p(\Omega)}\]
and the above proof works for every summand, which yields the result in the case of general $\delta$.

Finally, let $B_{2r} = B(\beta_0, 2r) \subset U$ be an open ball of radius $2r > 0$ and let $p>1$ be such that $\E |\mu_n(f;z)|^p$ is uniformly bounded both in $z \in B_r$ and $n \ge 1$.
Then almost surely for all $n \ge 1$ the function $z \mapsto \mu_n(f;z)$ belongs to the Bergman space $A^p(B_r) = L^p(B_r) \cap H(B_r)$, where $H(B_r)$ is the space of analytic functions on $B_r$.
As a closed subspace of $L^p(B_r)$ the space $A^p(B_r)$ has the Radon--Nikodym property.
Hence the uniform boundedness
\[\E \|\mu_n(f;\cdot)\|_{A^p(B_r)}^p = \int_{B_r} \E |\mu_n(f;x + i y)|^p \, dx \, dy \le |B_r| \sup_{n \ge 1} \sup_{z \in B_r} \E |\mu_n(f;z)|^p < \infty\]
for $n \ge 1$ of the Bergman norm in $L^p(\Omega)$ implies that the $A^p(B_r)$-valued martingale $\mu_n(f;z)$ converges almost surely in $A^p(B_r)$.
In particular we have uniform convergence on all compact subsets of $B_r$ to an analytic function $z \mapsto \mu(f;z)$.
This finishes the proof of Theorem~\ref{thm:main_result}.

\section{Examples}

In this section we consider two basic examples of non-Gaussian chaos measures for which our theory applies.
In the end we also discuss some open questions.

\subsection{The non-Gaussian Fourier series}

As our first application of Theorem~\ref{thm:main_result} we will prove Theorem~\ref{thm:fourier_result}.
Recall that we are interested in the random Fourier series
\[\sum_{k=1}^\infty X_k(x) = \sum_{k=1}^\infty \frac{1}{\sqrt{k}}(A_k \cos(2\pi k x) + B_k \sin(2\pi k x)),\]
where $A_k$ and $B_k$ are i.i.d. random variables satisfying $\E e^{\lambda A_1} < \infty$ for all $\lambda \in \reals$.

The assumptions of Theorem~\ref{thm:main_result} are fairly straightforward to establish.

\begin{proof}[Proof of Theorem~\ref{thm:fourier_result}]
  The moment condition \eqref{eq:moment_condition} is clear.
  The derivative of $X_k$ in this case is
  \[X_k'(x) = 2\pi\sqrt{k}(-A_k \sin(2\pi k x) + B_k \cos(2\pi k x)),\]
  and it is easy to check that its supremum is
  \[2\pi\sqrt{k}\sqrt{A_k^2 + B_k^2},\]
  whose $r$-th moment is of order $k^{r/2}$.
  Hence there exists a constant $C > 0$ such that
  \[\sum_{k=1}^n \E |\sup_{x \in I} X_k'(x)|^r \le C n^{r/2 + 1}.\]
  On the other hand $e^{r \sum_{k=1}^n \E X_k(0)^2} = e^{r \log(n) + O(1)}$ is of order $n^r$ so \eqref{eq:continuity3} clearly holds.

  It remains to verify that the field has a locally log-correlated structure.
  The condition $\E X_k(x)^2 \to 0$ obviously holds.
  In \eqref{eq:log_correlated} we choose $\delta = \frac{1}{2}$.
  Notice that $\sum_{k=1}^n \E X_k(0)^2 = \log(n) + O(1)$.
  Assume first that $|x-y| \le 1/n$.
  Then
  \[\min(\log \frac{1}{|x-y|}, \sum_{k=1}^n \E X_k(0)^2) = \log(n) + O(1).\]
  On the other hand
  \begin{align*}
    \sum_{k=1}^n \E X_k(x) X_k(y) & = \sum_{k=1}^n \frac{\cos(2\pi k (x-y))}{k} \\
                                  & = \sum_{k=1}^n \frac{1 + O(k^2 (x-y)^2)}{k} = \log(n) + O(1),
  \end{align*}
  which shows that \eqref{eq:log_correlated} holds in this case.
  Assume then that $\frac{1}{2} \ge |x-y| > 1/n$.
  In this case
  \begin{align*}
    & \min(\log \frac{1}{|x-y|}, \sum_{k=1}^n \E X_k(0)^2) = \log \frac{1}{|x-y|} + O(1) = \log \frac{1}{2\sin(\pi |x-y|)} + O(1) \\
    & = \sum_{k=1}^n \frac{\cos(2\pi k (x-y))}{k} + \sum_{k=n+1}^\infty \frac{\cos(2\pi k (x-y))}{k} + O(1),
  \end{align*}
  and it is enough to show that
  \[\sum_{k=n+1}^\infty \frac{\cos(2\pi k (x-y))}{k}\]
  is bounded.
  Let
  \[R_n(t)= \sum_{k=n+1}^\infty \frac{\cos(2\pi k t)}{k} = \Re \sum_{k=n+1}^\infty \frac{e^{2\pi i k t}}{k}.\]
  For all $N \ge n+1$ we have
  \[\sum_{k=n+1}^N \frac{e^{2\pi i k t}}{k} = \sum_{k=n+1}^N e^{2\pi i k t} \int_0^1 r^{k-1} \, dr = \int_0^1 \frac{e^{2\pi i (N+1) t} r^{N} - e^{2\pi i (n+1) t} r^{n}}{e^{2\pi i t} r - 1} \, dr,\]
  and since we have the inequalities
  \[|e^{2\pi i t} r - 1| \ge \begin{cases} 1, & \text{when } \frac{1}{2} \ge |t| \ge \frac{1}{4},\\
  \sin(2\pi t), & \text{when } |t| \le \frac{1}{4},
    \end{cases}\]
  we see that for fixed $|t| > 0$ the denominator is bounded and hence we may take limit as $N \to \infty$ to get
  \[|R_n(t)| \le \int_0^1 \frac{r^n}{|e^{2\pi i t} r - 1|} \, dr.\]
  If $\frac{1}{2} \ge |t| \ge \frac{1}{4}$, we have $|R_n(t)| \le \int_0^1 r^n \, dr = \frac{1}{n+1}$ and similarly if $0 < |t| \le \frac{1}{4}$, we have $|R_n(t)| \le \frac{1}{(n+1) \sin(2\pi t)}$. 
  This shows the boundedness of $|R_n(x-y)|$ when $\frac{1}{2} \ge |x-y| \ge \frac{1}{n}$.
\end{proof}

\subsection{Chaos induced by dilations of a stationary process}

Our second example is a small variation of the one given by Mannersalo, Norros, and Riedi in \cite{mannersalo2002multifractal}.
In their paper random measures of the form
\[\prod \Lambda_k(b^k x) \, dx\]
were studied on the real line.
Here $\Lambda_k$ are non-negative stationary i.i.d. random processes with expectation $1$, $dx$ is the $1$-dimensional Lebesgue measure, and $b > 1$ is a constant.

In the present example we define
\begin{equation}\label{eq:X_k_dilated}
X_k(x) = a_k Y_k(b_k x),
\end{equation}
where $Y_k$ are i.i.d. continuous, centered and stationary random fields on $\reals^d$ with the covariance function
\[\E Y_k(x) Y_k(y) = f(|x-y|)\]
and $a_k,b_k > 0$ are constants.
Thus the case $a_k = 1$, $b_k = b^k$ would allow us to write $\Lambda_k(x) = \frac{e^{Y_k(b^k x)}}{\E e^{Y_k(b^k x)}}$ and obtain the situation of \cite{mannersalo2002multifractal}.
We are not, however, quite able to consider this particular case since our method requires weak decay from the coefficients $a_k$.

First of all, we assume that
\begin{equation}\label{eq:Y_exponential_assumption}
  \E e^{\lambda Y_k(0)} < \infty
\end{equation}
for all $\lambda \in \reals$ and that $Y_k(x)$ satisfies the regularity condition
\begin{equation}\label{eq:Y_regularity}
  \E |Y_k(x) - Y_k(y)|^r \le C_r |x-y|^r \quad \text{ for all } r \ge 2,
\end{equation}
where $C_r > 0$ is a constant that may depend on $r$.
Furthermore we require that
\begin{equation}\label{eq:f_assumption}
  f(t) = 1 + O(t) \quad \text{and} \quad f(t) = O(t^{-\delta})
\end{equation}
for some $\delta > 0$.

As a side remark, notice that if \eqref{eq:Y_regularity} holds for some sequence $r_n \to \infty$, then by Hölder's inequality it holds for all $r \ge 2$.

For this model we obtain the following result.

\begin{theorem}\label{thm:general_scaling}
  Let $a_k > 0$ be such that
  \[\lim_{k \to \infty} a_k = 0, \quad \sum_{k=1}^\infty a_k^2 = \infty, \quad \text{ and } \quad \sum_{k=1}^\infty a_k^3 < \infty.\]
  Assume that $b_k$ are of the form
  \[b_k = \exp(\sum_{j=1}^k a_k^2 + c_k),\]
  where $c_k \in \reals$ satisfy $\sup_{k \ge 1} |c_k| < \infty$.
  Then the conclusions of Theorem~\ref{thm:fourier_result} hold for the chaos measures
  \[d \mu_n = \frac{e^{\sum_{k=1}^n a_k Y_k(b_k x)}}{\E e^{\sum_{k=1}^n a_k Y_k(b_k x)}} \, dx,\]
  where $Y_k$ is assumed to satisfy \eqref{eq:Y_exponential_assumption}, \eqref{eq:Y_regularity} and \eqref{eq:f_assumption}.
\end{theorem}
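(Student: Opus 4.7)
The plan is to verify the hypotheses of Theorem~\ref{thm:main_result} for the fields $X_k(x) = a_k Y_k(b_k x)$, after which the conclusions follow directly. Independence, centeredness and continuity of the $X_k$ are inherited from the $Y_k$, so the substantive content lies in checking the locally log-correlated structure together with the regularity and moment conditions. It is convenient to write $V_n := \sum_{k=1}^n a_k^2$; the hypothesis $b_k = \exp(V_k + c_k)$ with $\sup_k |c_k| < \infty$ then gives $\log b_k = V_k + O(1)$ uniformly in $k$.

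The heart of the argument is the verification of \eqref{eq:log_correlated}. Starting from
\[\sum_{k=1}^n \E X_k(x) X_k(y) = \sum_{k=1}^n a_k^2 f(b_k|x-y|),\]
I would introduce the threshold index $k^\ast = k^\ast(x,y)$ defined by $b_{k^\ast}|x-y| \le 1 < b_{k^\ast+1}|x-y|$, with obvious conventions at the extremes. The relation $\log b_{k^\ast} = V_{k^\ast} + O(1)$ gives $V_{k^\ast} = \log \tfrac{1}{|x-y|} + O(1)$. For $k \le k^\ast \wedge n$, the expansion $f(t) = 1 + O(t)$ produces a main term $V_{k^\ast \wedge n}$ together with an error $|x-y|\sum_{k \le k^\ast} a_k^2 b_k$. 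For $k > k^\ast$ (relevant only when $k^\ast < n$), the decay $f(t) = O(t^{-\delta})$ gives an error of the form $|x-y|^{-\delta}\sum_{k>k^\ast} a_k^2 b_k^{-\delta}$. Both tails are controlled by Riemann-sum comparisons: writing $a_k^2 = V_k - V_{k-1}$ and $b_k^{\pm} = e^{\pm V_k + O(1)}$, the sums are bounded by $\int e^{\pm s}\,ds$, yielding $\sum_{k\le k^\ast} a_k^2 b_k \lesssim e^{V_{k^\ast}} \lesssim 1/|x-y|$ and $\sum_{k>k^\ast} a_k^2 b_k^{-\delta} \lesssim e^{-\delta V_{k^\ast}} \lesssim |x-y|^\delta$. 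After multiplying by the respective prefactors both errors are $O(1)$, which gives
\[\sum_{k=1}^n a_k^2 f(b_k|x-y|) = \min\bigl(\log \tfrac{1}{|x-y|}, V_n\bigr) + O(1),\]
exactly as demanded by \eqref{eq:log_correlated}.

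The remaining conditions are comparatively easy. By stationarity $\E|X_k(x)|^{3+\varepsilon} = a_k^{3+\varepsilon} \E|Y_1(0)|^{3+\varepsilon}$, where the second factor is finite by \eqref{eq:Y_exponential_assumption}; hence $(\E|X_k(x)|^{3+\varepsilon})^{3/(3+\varepsilon)} \lesssim a_k^3$ and \eqref{eq:moment_condition} follows from $\sum a_k^3 < \infty$. The uniform bound $\sup_k\sup_x \E e^{\lambda X_k(x)} = \sup_k \E e^{\lambda a_k Y_1(0)} < \infty$ follows from $a_k \to 0$ and \eqref{eq:Y_exponential_assumption}, giving \eqref{eq:X_exponential_moments}. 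For \eqref{eq:continuity} I would verify the stronger \eqref{eq:continuity2}: from \eqref{eq:Y_regularity},
\[\sum_{k=1}^n \E|X_k(x) - X_k(y)|^r \le C_r |x-y|^r \sum_{k=1}^n a_k^r b_k^r,\]
and $\sum_{k=1}^n a_k^r b_k^r \lesssim e^{r V_n}$ by the same type of Riemann-sum comparison applied to $e^{rs}$ (using that $a_k^{r-2}$ is bounded).

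The main obstacle I expect is the log-correlation computation itself. Since the perturbations $c_k$ are only bounded and possibly non-monotone, the discrete-to-continuous comparisons in the two tail sums need to be done with some care, absorbing factors $e^{\pm\delta c_k}$ into universal constants before invoking the integral bounds, and one must verify that $k^\ast$ satisfies its expected asymptotics uniformly in $x,y$ despite these fluctuations.
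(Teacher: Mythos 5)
Your proposal is correct and follows essentially the same route as the paper: split the covariance sum at the index where $\sum_{k\le N} a_k^2$ crosses $\log\frac{1}{|x-y|}$, control the head via $f(t)=1+O(t)$ and the tail via $f(t)=O(t^{-\delta})$, and bound the resulting sums $\sum a_k^2 b_k^{\pm\delta}$ by an exponential-sum estimate (the paper isolates this as Lemma~\ref{lemma:exp_sum}, proved by a telescoping inequality rather than your Riemann-sum comparison, and it sidesteps the non-monotonicity of $b_k$ that you flag by defining the threshold index through the monotone quantity $A_N=\sum_{k\le N}a_k^2$ instead of through $b_k|x-y|$). The verifications of \eqref{eq:moment_condition}, \eqref{eq:X_exponential_moments} and \eqref{eq:continuity2} coincide with the paper's.
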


Towards the proof of the above result, we define
\[A_n = \sum_{k=1}^n \E X_k(0)^2 = \sum_{k=1}^n a_k^2, \quad n \ge 1.\]
We then have the following lemma.

\begin{lemma}\label{lemma:exp_sum}
  Let $\alpha \in \reals$, $\alpha \neq 0$.
  Then there exists $C_\alpha > 0$ such that
  \[\sum_{k=m}^n a_k^2 e^{\alpha A_k} \le C_{\alpha} |e^{\alpha A_n} - e^{\alpha A_{m-1}}|\]
  for all $n \ge m \ge 1$.
\end{lemma}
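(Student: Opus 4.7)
The plan is to exploit a telescoping structure based on the key observation that since $a_k \to 0$, the consecutive values $A_{k-1}$ and $A_k = A_{k-1} + a_k^2$ differ by a vanishing amount, so the finite-difference $e^{\alpha A_k} - e^{\alpha A_{k-1}}$ is controlled from below by a constant multiple of $a_k^2 e^{\alpha A_k}$. Let $M = \sup_{k \ge 1} a_k^2$, which is finite because $a_k \to 0$.

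The first step is a one-term Taylor/convexity estimate. If $\alpha > 0$, write
\[
e^{\alpha A_k} - e^{\alpha A_{k-1}} = e^{\alpha A_{k-1}}(e^{\alpha a_k^2} - 1) \ge \alpha a_k^2 e^{\alpha A_{k-1}} \ge \alpha e^{-\alpha M} a_k^2 e^{\alpha A_k},
\]
using $e^t - 1 \ge t$ for $t \ge 0$ and the crude bound $e^{\alpha A_{k-1}} \ge e^{-\alpha M} e^{\alpha A_k}$. If $\alpha < 0$, the same idea (with the roles of $A_{k-1}$ and $A_k$ swapped) gives
\[
e^{\alpha A_{k-1}} - e^{\alpha A_k} = e^{\alpha A_k}(e^{-\alpha a_k^2} - 1) \ge (-\alpha) a_k^2 e^{\alpha A_k}.
\]
In either case we obtain $|e^{\alpha A_k} - e^{\alpha A_{k-1}}| \ge c_\alpha a_k^2 e^{\alpha A_k}$ for some positive constant $c_\alpha$ depending only on $\alpha$ and $M$.

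The second step is to sum this inequality from $k=m$ to $k=n$ and observe that the sign of $e^{\alpha A_k} - e^{\alpha A_{k-1}}$ is the same for every $k$ (positive if $\alpha > 0$, negative if $\alpha < 0$, because $A_k$ is increasing in $k$). Hence the absolute values can be pulled outside and the sum telescopes:
\[
\sum_{k=m}^n a_k^2 e^{\alpha A_k} \le \frac{1}{c_\alpha} \sum_{k=m}^n |e^{\alpha A_k} - e^{\alpha A_{k-1}}| = \frac{1}{c_\alpha} \left| \sum_{k=m}^n (e^{\alpha A_k} - e^{\alpha A_{k-1}}) \right| = \frac{1}{c_\alpha} |e^{\alpha A_n} - e^{\alpha A_{m-1}}|,
\]
which is the desired conclusion with $C_\alpha = 1/c_\alpha$.

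There is no real obstacle here: the argument is a two-line Taylor bound combined with a telescoping identity, and the only quantitative input is the uniform bound $\sup_k a_k^2 < \infty$ (which is automatic from $a_k \to 0$). The assumptions $\sum a_k^2 = \infty$ and $\sum a_k^3 < \infty$ play no role in this particular lemma; they will be used later, in arguments where this lemma is applied.
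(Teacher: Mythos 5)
Your proof is correct and follows essentially the same route as the paper's: both establish the per-term bound $a_k^2 e^{\alpha A_k} \le C_\alpha\,|e^{\alpha A_k} - e^{\alpha A_{k-1}}|$ using only the uniform bound $\sup_k a_k^2 < \infty$, and then telescope, noting the differences all have the same sign. The paper packages the elementary inequality as $x \le \frac{S}{1-e^{-\alpha S}}(1-e^{-\alpha x})$ for $0 \le x \le S$, which is just a rearrangement of your Taylor bound $e^t - 1 \ge t$ combined with $e^{-\alpha a_k^2} \ge e^{-\alpha M}$; your closing remark that the hypotheses $\sum a_k^2 = \infty$ and $\sum a_k^3 < \infty$ are not used here is also accurate.
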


\begin{proof}
  Assume first that $\alpha > 0$.
  We have
  \[\sum_{k=m}^n a_k^2 e^{\alpha A_k} = \sum_{k=m}^n (A_k - A_{k-1}) e^{\alpha A_k}.\]
  Let $S = \sup_{k \ge 1} a_k^2$.
  Since $a_k \to 0$, there exists $k_0 \ge 1$ such that for $k \ge k_0$ we have $A_k - A_{k-1} \le 1$.
  The elementary inequality $x \le \frac{S}{1 - e^{-\alpha S}} (1 - e^{-\alpha x})$ for $0 \le x \le S$ gives us
  \[\sum_{k=m}^n a_k^2 e^{\alpha A_k} \le C_\alpha \sum_{k=m}^n (e^{\alpha A_k} - e^{\alpha A_{k-1}}) = C_\alpha (e^{\alpha A_n} - e^{\alpha A_{m-1}}),\]
  where $C_\alpha = \frac{S}{1 - e^{-\alpha S}}$.

  On the other hand if $\alpha < 0$, then we have the inequality $x \le \frac{-1}{\alpha} (e^{-\alpha x} - 1)$ for $x \ge 0$.
  Thus
  \[\sum_{k=m}^n a_k^2 e^{\alpha A_k} \le C_\alpha \sum_{k=m}^n (e^{\alpha A_{k-1}} - e^{\alpha A_k}) = C_\alpha (e^{\alpha A_{m-1}} - e^{\alpha A_{n}}),\]
  where $C_\alpha = \frac{-1}{\alpha}$.
\end{proof}

\begin{proof}[Proof of Theorem~\ref{thm:general_scaling}]
  Once again, it is enough to check the assumptions of Theorem~\ref{thm:main_result}.
  We will start by showing that we have a locally log-correlated structure.
  For this it is enough to verify that \eqref{eq:log_correlated} holds, the other conditions being trivial.
  Assume first that $n \ge 1$ and $x,y \in I$ are such that $A_n < \log \frac{1}{|x-y|}$ holds.
  Then by Lemma~\ref{lemma:exp_sum} the left hand side of \eqref{eq:log_correlated} satisfies the inequality
  \begin{align}
    \Big|\sum_{k=1}^n \E X_k(x) X_k(y) - A_n\Big| & \le \sum_{k=1}^n a_k^2 |f(b_k |x-y|) - 1| \lesssim \sum_{k=1}^n a_k^2 b_k |x-y| \nonumber \\
                                                  & \lesssim \sum_{k=1}^n a_k^2 e^{A_k} |x-y| \lesssim (1 + e^{A_n}) |x-y| \lesssim 1. \label{eq:dilated_series_head}
  \end{align}
  Thus \eqref{eq:log_correlated} holds in this case.
  Assume then that $A_n \ge \log \frac{1}{|x-y|}$ and let $N$ be the last index for which $A_N < \log \frac{1}{|x-y|}$.
  In this case we have
  \begin{align*}
    \sum_{k=1}^n \E X_k(x) X_k(y) - \log \frac{1}{|x-y|} & = [\sum_{k=1}^{N} a_k^2 f(b_k |x-y|) - A_N] \\
    & \quad + [A_N - \log \frac{1}{|x-y|}] + \sum_{k=N+1}^n a_k^2 f(b_k |x-y|).
  \end{align*}
  The first term on the right hand side is bounded by \eqref{eq:dilated_series_head}  and the second term is bounded simply by the way $A_N$ was chosen.
  Finally, again by Lemma~\ref{lemma:exp_sum}, we have
  \[\sum_{k=N+1}^n a_k^2 |f(b_k |x-y|)| \lesssim \sum_{k=N+1}^n a_k^2 b_k^{-\delta} |x-y|^{-\delta} \lesssim |x-y|^{-\delta} e^{-\delta A_N}.\]
  Since $e^{-\delta A_N} \approx |x-y|^{\delta}$, we see that \eqref{eq:log_correlated} holds also in this case.

  Next we will verify the regularity conditions \eqref{eq:moment_condition} and \eqref{eq:continuity}.
  Clearly the requirement $\sup_{x \in I} \E e^{\lambda X_k(x)} < \infty$ holds by assumption \eqref{eq:Y_exponential_assumption}.
  Moreover, we have
  \[\Big(\E |X_k(x)|^{3 + \varepsilon}\Big)^{\frac{3}{3+\varepsilon}} = a_k^{3} \Big(\E |Y_k(0)|^{3 + \varepsilon}\Big)^{\frac{3}{3+\varepsilon}},\]
  and thus \eqref{eq:moment_condition} holds.
  Finally, we have 
  \[\sum_{k=1}^n \E|X_k(x) - X_k(y)|^r \le C_r \sum_{k=1}^n a_k^r b_k^r |x-y|^r \lesssim C_r b_n^r |x-y|^r \lesssim C_r e^{r \sum_{k=1}^n a_k^2} |x-y|^r\]
  by Lemma~\ref{lemma:exp_sum}, so \eqref{eq:continuity2} holds and therefore also the condition \eqref{eq:continuity} follows.
\end{proof}

To illustrate the relationship between $a_k$ and $b_k$ in Theorem~\ref{thm:general_scaling}, we mention the following corollary.

\begin{corollary}
  Theorem~\ref{thm:general_scaling} holds in the following three cases:
  \begin{itemize}
    \item $a_k = k^{-\alpha}$ for some $\frac{1}{3} < \alpha < \frac{1}{2}$ and $b_k = e^{\frac{k^{1-2\alpha}}{1-2\alpha}}$,
    \item $a_k = k^{-1/2}$ and $b_k = k$, or
    \item $a_k = (k \log k)^{-1/2}$ and $b_k = \log k$ when $k \ge 2$ and $a_1 = b_1 = 0$.
  \end{itemize}
\end{corollary}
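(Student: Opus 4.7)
The plan is straightforward: for each of the three choices of $(a_k,b_k)$ we only need to verify the four hypotheses on $(a_k)_{k\ge1}$ listed in Theorem~\ref{thm:general_scaling}, namely $a_k \to 0$, $\sum a_k^2 = \infty$, $\sum a_k^3 < \infty$, and that $b_k$ can be written as $\exp\bigl(\sum_{j=1}^k a_j^2 + c_k\bigr)$ with $(c_k)$ uniformly bounded. The first three are immediate from standard $p$-series (or Cauchy condensation in case 3) computations; the only point requiring care is the last one, i.e.\ identifying the partial sum $A_n := \sum_{k=1}^n a_k^2$ up to a bounded error.

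First I would dispose of the convergence/divergence conditions. In case 1, $a_k = k^{-\alpha}$ with $\alpha \in (1/3,1/2)$, so $\sum a_k^2 = \sum k^{-2\alpha}$ diverges (as $2\alpha<1$) while $\sum a_k^3 = \sum k^{-3\alpha}$ converges (as $3\alpha>1$). In case 2, $a_k=k^{-1/2}$ gives the harmonic series for $\sum a_k^2$ and a convergent $p$-series for $\sum a_k^3$. In case 3, the Cauchy condensation test gives $\sum (k\log k)^{-1}$ comparable to $\sum 2^k(2^k k\log 2)^{-1} = (\log 2)^{-1}\sum k^{-1}$, which diverges; and $\sum (k\log k)^{-3/2}$ is dominated by $\sum k^{-3/2}$, hence converges. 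In all three cases $a_k \to 0$ is obvious.

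The main (and really only) step is the asymptotic comparison of $A_n$ with $\log b_n$. I would carry this out via the integral test: for a positive decreasing function $g$ on $[1,\infty)$ one has
\[
0 \le \sum_{k=1}^n g(k) - \int_1^n g(x)\,dx \le g(1),
\]
which yields a bounded remainder. Applying this with $g(x)=x^{-2\alpha}$ for case 1 gives $A_n = \tfrac{n^{1-2\alpha}}{1-2\alpha} + O(1)$, so $A_n - \log b_n$ is bounded; with $g(x)=x^{-1}$ for case 2, $A_n = \log n + O(1)$, matching $\log b_n = \log n$; and with $g(x) = (x\log x)^{-1}$ for case 3 (and the substitution $u=\log x$), $A_n = \log\log n + O(1)$, matching $\log b_n = \log\log n$. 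In each case we may therefore set $c_k := \log b_k - A_k$ and observe that $\sup_k |c_k| < \infty$, completing the verification.

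The only potentially delicate point — and it is minor — is ensuring that the remainder in the integral test is genuinely bounded rather than merely $o(A_n)$, which is what forces us to work with monotone integrands and keep track of the additive constants. Once that is done, Theorem~\ref{thm:general_scaling} applies verbatim in each of the three cases, and the corollary follows.
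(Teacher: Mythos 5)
Your verification is correct and is exactly the routine check the paper intends (it omits the proof of this corollary entirely, treating it as an immediate illustration of Theorem~\ref{thm:general_scaling}): the integral-test comparison $A_n = \log b_n + O(1)$ in each case, plus the standard $p$-series computations, is all that is needed. The only cosmetic point you could add is that the convention $a_1=b_1=0$ in the third case simply makes $X_1\equiv 0$, so one starts the verification at $k=2$.
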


As an example of stationary process for which our theorem is valid one can take a stationary Gaussian process $Y(x)$ whose covariance function $f$ satisfies $f(t) = 1 + O(t^2)$ and decays like $t^{-\delta}$ for some $\delta > 0$.
Indeed, in this case we have
\[\E |Y(x) - Y(y)|^r \lesssim_r (\E |Y(x) - Y(y)|^2)^{\frac{r}{2}} = 2^{\frac{r}{2}} (1 - f(|x-y|))^{\frac{r}{2}} \lesssim_r |x-y|^r.\]

A very simple non-Gaussian example is given by $Y(x) = A \cos(x + U)$, where $A$ and $U$ are independent, $U$ is uniformly distributed in $[0,2\pi)$, $\E A = 0 $, $\E A^2 = 2$ and $\E e^{\lambda A} < \infty$ for all $\lambda \in \reals$. It is easy to construct more complicated families of non-Gaussian stationary processes satisfying the conditions of the theorem.

\subsection{Open questions}

On the foundational level several questions are widely open in the case of non-Gaussian chaos.
An important property of Gaussian chaos is universality, i.e. independence of the chaos of the approximation used.
For results in this direction, see e.g. \cite{robert2010gaussian,shamov2014gaussian,junnila2015uniqueness}.
No such general results are known in the non-Gaussian case.
Another interesting open problem is the construction of critical chaoses.

Moreover, it would be interesting to study the finer properties of the resulting chaoses.
For example, can one determine the multifractal spectrum of the measure?
Is it possible to determine the exact $L^p$-integrability or the tail behaviour of the total mass?
In general one may try to examine what are the differences and similarities of these measures to their Gaussian counterparts.

\medskip

\end{document}